\documentclass[12pt, a4paper]{amsart}

\usepackage[hmargin=30mm, vmargin=25mm, includefoot, twoside]{geometry}
\usepackage[bookmarksopen=true]{hyperref}
\usepackage[usenames,dvipsnames]{xcolor}

\usepackage{amscd}
\usepackage{amsfonts,amssymb,verbatim}
\usepackage{latexsym}
\usepackage{mathrsfs}
\usepackage{stmaryrd}
\usepackage{xspace}
\usepackage{enumerate, paralist}
\usepackage{graphicx}
\usepackage{subcaption}
\usepackage[all]{xy}
\usepackage{extarrows}
\usepackage{tikz-cd}

\usepackage{txfonts, pxfonts}

\usepackage{amsthm}
\usepackage{amsmath}

\usepackage{todonotes}
\newtheorem{thm}{Theorem}[section]
 \newtheorem{cor}[thm]{Corollary}
 \newtheorem{lem}[thm]{Lemma}
 \newtheorem{prop}[thm]{Proposition}

\numberwithin{equation}{section}

 \theoremstyle{definition}
  \newtheorem{defn}[thm]{Definition}

 \theoremstyle{remark}
 \newtheorem{rem}[thm]{Remark}

\newtheorem*{claim*}{Claim}

\def\Dax{{\rm Dax}}

\def\ZZ{\mathbb{Z}}

\def\S{\mathscr{S}}

\def\B{\mathfrak{B}}

\begin{document}

\title[]{One internal stabilization is enough for resolving self-referential tubes}

\author{Qizheng You}

\address[Q. You]{School of Mathematical Sciences, Peking University, China.}
\email{qizhengyou@stu.pku.edu.cn}

\date{}

\thanks{}

\begin{abstract}
  This paper shows that one internal stabilization is enough to resolve all the self-referential tubes on an embedding.
  As an application, this paper gives a complete isotopy classification 
  for embedded surfaces with a common geometric dual in orientable $S^2$-bundle over a surface,
  provided that the embedded surfaces have greater genus than the base surface of the bundle. 
\end{abstract}

\date{\today}
\maketitle

\parskip 4pt

\textit{
  Keywords: 
  4-manifolds, internal stabilization, isotopy classification.
}

\section{Introduction}\label{sec:intro}

Internal stabilization is an important operation for eliminating obstructions to promoting homotopy to isotopy, as well as topological isotopy to smooth isotopy. As recalled in Definition \ref{def:mkpt}, an \emph{internal stabilization} of an embedded surface is obtained by attaching a trivial $1$-handle in a local $4$-ball. A fundamental result of Baykur and Sunukjian \cite{BS16} shows that homologous embedded surfaces become smoothly isotopic after sufficiently many $1$-handle stabilizations. Moreover, under the additional assumption that the inclusion-induced homomorphism
\(
\pi_1(\partial \nu \Sigma_i)\rightarrow \pi_1(X\setminus \nu \Sigma_i)
\)
is surjective for $i=0,1$, these stabilizations may be chosen to be local internal stabilizations, i.e.\ supported in local $4$-balls.

A natural question is how many stabilizations are actually necessary, and in particular, whether a single stabilization might suffice. 
Early evidence in this direction was provided by Baykur and Sunukjian in \cite{BS16}, 
who showed that many previously known constructions of exotically knotted surfaces
(including those arising from rim surgery, twist-rim surgery, annulus surgery, and tangle surgery)
become smoothly isotopic after a single internal stabilization. 
This motivated the ``one stabilization is enough'' phenomenon and the question of how broadly it persists.
There is also substantial evidence for a one-stabilization phenomenon in the setting of external stabilization. 
Auckly, Kim, Melvin, Ruberman, and Schwartz proved in \cite{AKMRS19} that two homologous ordinary closed oriented surfaces of the same genus 
with simply connected complements become smoothly isotopic after a single external stabilization (i.e.\ taking connected sum of the external $4$-manifold with $S^2 \times S^2$).
On the other hand, the one-stabilization phenomenon does not hold in full generality. 
In \cite{Gu22}, Guth constructed exotic disks in $B^4$ with arbitrarily large internal stabilization distance. 
Thus, in the presence of boundary, there is no uniform bound on the number of internal stabilizations required. 
Further counterexamples have been found for closed surfaces. 
For example, Auckly in \cite{A26} showed that the internal stabilization distance 
of closed surfaces in some simply-connected $4$-manifolds can be arbitrarily large.

A related source of obstructions to isotopy arises from self-referential tubes. 
Gabai introduced in \cite{G21} the notions of self-referential tubes. 
Moreover, according to \cite{KT24}, for embedded disks with a common geometric dual in smooth $4$-manifolds, 
self-referential tubes are the only obstructions from homotopy to smooth isotopy.
In this paper, we show that a single internal stabilization suffices to eliminate the obstruction arising from self-referential tubes. 
Our result provides further evidence for the one-stabilization phenomenon within this particular class of obstructions, 
especially for neatly embedded surfaces with boundary.

In this paper, we work in the smooth category,
and assume that all the surfaces and manifolds mentioned are oriented. 
Our main result is stated as follows.

\begin{thm}\label{thm:tech}
  Let $M$ be a $4$-manifold with boundary.
  Suppose $i:S \hookrightarrow M$ is a neat embedding of compact surface with a boundary geometric dual. 
  Denote the embedding obtained by adding self-referential tubes on $i$ as $j:S \hookrightarrow M$.
  After taking an internal stabilization on $i$ and $j$ respectively, 
  the resulting embeddings are isotopic relative to the boundary. 
\end{thm}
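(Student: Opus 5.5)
The plan is to reduce to a single self-referential tube and then show that, after one internal stabilization, the tube's self-referential twisting can be undone by an isotopy supported near the tube and the stabilizing handle.

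\emph{Reduction to one tube.} Recall Gabai's construction. A self-referential tube arises from a loop $\gamma$ based on $i(S)$ that runs along a whisker to a normal sphere or meridian. The surface is modified by tubing $i(S)$ to a parallel copy of itself around an arc that runs back through the meridian of $i(S)$. By \cite{KT24}, the difference between $i$ and $j$ is measured in a Dax-type group; modulo isotopy, $j$ is obtained from $i$ by a sequence of finitely many self-referential tube additions, each determined by an element $g$ of $\pi_1(M)$ (up to the relevant relations). It therefore suffices to treat $j$ differing from $i$ by a single self-referential tube. For several tubes, I would stabilize once and slide the same stabilization handle around to each tube in turn. Sliding a trivial $1$-handle along the surface is an isotopy, so one handle can serve every tube successively. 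This is the key point making ``one'' stabilization enough.

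\emph{Main step.} Take the internal stabilization $i'$ of $i$, a trivial $1$-handle $h$ in a small $4$-ball $B$ near a point $p\in i(S)$. Slide the foot of $h$ along the whisker arc so that $B$ sits at the location where the self-referential tube would be attached. The stabilized surface locally contains a punctured torus $T$ with a compressing disk $D$ (the core of the cocore circle), together with a dual circle $\beta$ on $T$ bounding a disk $D'$ on the other side.

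The idea is as follows. A self-referential tube is the result of a finger/Whitney-type move in which the finger passes through the surface's own meridian. On a stabilized surface, the meridian of $i(S)$ near $p$ is isotopic, within the complement, to a curve on $\partial\nu$ that can be pushed across $D'$. Consequently, the sheet of $i'(S)$ that the tube would have to wrap around can be pushed through the handle instead. Concretely, I would show that adding a self-referential tube along $g$ to $i'$ equals, up to isotopy, tubing into a normal sphere that is isotopic through the stabilization to a trivial one. Here the geometric dual $G$ from the boundary supplies the normal sphere, which makes the ``tube into the dual'' move available and isotopically trivial, in the style of the Norman trick. The steps are:
\begin{enumerate}
\item Express the self-referential tube on $i'$ as a tube to a parallel copy of a meridian $2$-sphere linking $i'(S)$ (after a light-bulb isotopy using $G$).
\item Use the handle $h$ to isotope that meridian sphere off $i'(S)$ by sliding it across $D$ and then $D'$. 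This amounts to the standard observation that in a stabilized surface, the meridian circle becomes null-homotopic in a controlled way, bounding a disk that meets $i'(S)$ trivially.
\item Conclude that the tube becomes a trivial tube, so that $j'$ is isotopic to $i'$ rel boundary.
\end{enumerate}

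\emph{Main obstacle.} I expect the hardest step to be step 2: making precise that the framing and self-intersection data of the self-referential tube (its Dax invariant contribution, an element of $\mathbb{Z}[\pi_1 M]$) are killed by one handle and not merely changed. I would first verify this by a local model computation. Work in $B\times$(a neighborhood of the whisker), draw the tube and the stabilizing torus explicitly in a movie, and check that the ambient isotopy swinging the tube's foot around the handle's belt circle changes the Dax class by exactly the element contributed by the tube. If a sign or framing mismatch appears, I would compensate by choosing the handle orientation or by adding a Dehn twist of the torus, which is realizable by isotopy in the $4$-ball.
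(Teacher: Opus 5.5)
Your reduction to a single tube, reusing the same local handle for each tube in turn, is fine; it is how the paper passes from Theorem \ref{thm1} to Corollary \ref{cor1}. The gap is in the main step, which never gives a working mechanism.

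\textbf{The obstruction.} You have misidentified the object. The embedding $j$ is obtained by tubing $i$ along $\tilde\gamma$ into the meridian $2$-sphere $m$ of the \emph{path} $\gamma$ at $\gamma(t_0)$, and the tube itself runs through the $3$-ball bounded by $m$. It is not a tube to a parallel copy of $i(S)$. There is also no ``meridian $2$-sphere linking $i'(S)$'', since surfaces in a $4$-manifold are linked by circles.

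The real obstruction is this. The light bulb lemma (Lemma \ref{lem:lbl}) would swing the tube across $m$, but it fails hypothesis (4). Removing the piece of the tube that passes through $m$ disconnects $m$ from the point $p$ on $G$, and $m$ contains the point $q$ where the swinging disk meets the surface. So your step 1, a light-bulb isotopy using $G$, is not available.

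A handle that stays local does not help. As written, you move the whole ball $B$ along the whisker and then use the local punctured torus $T$ with $D$ and $D'$. Removing the tube then still separates $m$ from $p$.

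\textbf{The missing idea.} Slide only \emph{one} foot of the stabilizing tube along the entire self-referential tube. It then becomes a second tube, parallel to the first, from $D_0$ to $m$. The argument then runs as follows:
\begin{enumerate}
\item Removing either tube now leaves the surface connected, so Lemma \ref{lem:lbl} swings the first tube off $m$.
\item Lemma \ref{lem:lbl} then swings the second tube off $m$, using the first tube for connectivity.
\item What remains is $i$ with a $1$-handle along the parallel pair of arcs. Their concatenation is null-homotopic rel endpoints, so the handle retracts to a local one.
\end{enumerate}
The dual $G$ enters only through the point $p$ in the light bulb lemma; nothing is tubed into $G$.

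\textbf{Why your other ingredients do not substitute.}
\begin{itemize}
\item An internal stabilization does not change $\pi_1$ of the complement; for example, the meridian of the unknotted torus in $S^4$ still generates $H_1$ of its complement. So stabilization cannot make the meridian circle null-homotopic. Moreover, with $G$ present the meridian of $i(S)$ already bounds the punctured $G$, so this is not where the handle acts.
\item The local Dax computation you plan as verification cannot close the argument. The Dax invariant here is an invariant of homotopic neat disks rel boundary, and the stabilized surfaces are not disks. In any case, showing that an invariant vanishes yields no isotopy without a completeness result for stabilized surfaces, which you do not have.
\end{itemize}
The isotopy has to be constructed explicitly, as above.
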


The construction of self-referential tubes is due to the work \cite{G21} of Gabai, and is reviewed in Subsection \ref{subsec:srt}.
We prove Theorem \ref{thm:tech} by constructing the isotopy via the light bulb lemma of Gabai in \cite{G20}. 
After combining Theorem \ref{thm:tech} with the result \cite[Proposition 1.7]{KT24} of Kosanović and Teichner, 
we directly obtain the following corollary.

\begin{cor}\label{cor:main}
  Let $M$ be a $4$-manifold with boundary.
  Suppose $i_1$ and $i_2$ are neat embeddings of disks that are homotopic relative to the boundary.
  We assume that $i_1$ and $i_2$ admit a common boundary geometric dual $G \subset \partial M$. 
  After taking an internal stabilization on $i_1$ and $i_2$ respectively, 
  the resulting embeddings are isotopic relative to the boundary. 
\end{cor}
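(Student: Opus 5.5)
The corollary follows by composing two results, so the plan is to chain them. By \cite[Proposition 1.7]{KT24}, two neat disk embeddings $i_1,i_2$ that are homotopic relative to the boundary and share a common boundary geometric dual $G\subset\partial M$ differ only by self-referential tubes. Concretely, $i_2$ is isotopic relative to the boundary to an embedding $j$ obtained from $i_1$ by adding self-referential tubes. The tubes are governed by an element of the Dax group $\Dax$ determined by $\pi_1(M)$, and that element is the relative Dax invariant of the homotopy class of the pair. The first step is to check that the hypotheses of \cite[Proposition 1.7]{KT24} match ours: the surfaces are disks, the embeddings are neat, the homotopy is rel boundary, and the common dual lies in $\partial M$. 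I expect this to be direct, since the corollary is stated in exactly the setting of \cite{KT24}.

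Next, apply Theorem \ref{thm:tech} with $S=D^2$, $i=i_1$, and $j$ the embedding obtained by adding self-referential tubes. The boundary geometric dual $G$ of $i_1$ is exactly the hypothesis the theorem requires. The theorem then shows that the internal stabilizations of $i_1$ and of $j$ are isotopic rel boundary.

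Finally, pass from $j$ back to $i_2$. Since $j$ and $i_2$ are isotopic rel boundary, their internal stabilizations are also isotopic rel boundary. The reason is that an internal stabilization is performed in a small local $4$-ball. We may choose this ball away from the boundary, and carry it along the ambient isotopy, which we extend to a rel-boundary ambient isotopy of $M$ by isotopy extension. Moreover, the result of a single internal stabilization does not depend, up to isotopy, on where the local ball is placed along the connected surface: we can slide the trivial tube along the surface, which is possible because the stabilization is local. Composing the three isotopies gives the claim.

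The only subtle point is this independence of the internal stabilization from the choice of local ball, together with its compatibility with the isotopies. It is standard, and I would state it as a brief remark rather than prove it in detail. All the real content is in Theorem \ref{thm:tech} and \cite{KT24}.
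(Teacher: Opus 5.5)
Your proposal is correct and follows the paper's route. The paper gets this corollary directly by combining \cite[Proposition 1.7]{KT24} (homotopic disks with a common boundary dual differ, up to isotopy rel boundary, by self-referential tubes) with Theorem \ref{thm:tech}, and your step carrying the stabilization through the remaining isotopy is exactly Remarks \ref{rem:mkpt} and \ref{rem:mkpt2}.
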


Let $\Sigma$ be the closed surface with genus $g \geq 1$, and $X$ be an orientable $S^2$-bundle over $\Sigma$. 
Fix an $S^2$-fiber of $X$ and denote it by $G$.
For an embedded surface $\Sigma' \subset X$, 
we say $G$ is a \emph{geometric dual} of $\Sigma'$,
if $G$ and $\Sigma'$ intersect transversely and positively at one point.
As an application of Theorem \ref{thm:tech}, we state the following theorem regarding the isotopy classification of the embedded high-genus surfaces in $X$.

\begin{thm}\label{thm:main}
  Let $\Sigma'_1, \Sigma'_2 \subset X$ be two embedded closed surfaces with genus-$g'$ in $X$.
  Here $g' > g$.
  Suppose the following conditions holds.
  \begin{itemize}
    \item[(1)] The sphere $G$ is a common geometric dual of $\Sigma'_1$ and $\Sigma'_2$.
    \item[(2)] After parameterizations, $\Sigma'_1, \Sigma'_2$ are homotopic.
  \end{itemize}
  Then $\Sigma'_1$ and $\Sigma'_2$ are ambiently isotopic.
\end{thm}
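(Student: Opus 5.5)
Theorem \ref{thm:main} reduces to Theorem \ref{thm:tech}, with the extra genus $g'-g\geq 1$ of $\Sigma'_k$ playing the role of the internal stabilization.

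\textbf{Step 1: pass to a bounded problem.} Remove an open tubular neighborhood $\nu G\cong S^2\times D^2$ of the common dual fiber. The complement $M=X\setminus \nu G$ is a $4$-manifold with boundary $S^2\times S^1$. Each $\Sigma'_k$ meets $\nu G$ in a single disk, so $S_k=\Sigma'_k\setminus \nu G$ is a neatly embedded genus-$g'$ surface with one boundary circle $\{pt\}\times S^1$. A parallel copy $S^2\times\{pt\}\subset\partial M$ is a boundary geometric dual for both. An ambient isotopy can first be used to make the two surfaces agree near $G$, so that $S_1$ and $S_2$ have the same boundary.

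\textbf{Step 2: homotopy gives self-referential tubes.} Because of the dual $G$, a homotopy between $\Sigma'_1$ and $\Sigma'_2$ can be made rel boundary in $M$. I would then use the Kosanović--Teichner analysis (\cite[Proposition~1.7]{KT24} and its surface version) to show that homotopic rel-boundary neat embeddings with a common boundary dual differ, up to isotopy rel boundary, by adding self-referential tubes. The argument runs as follows.
\begin{itemize}
\item Decompose the homotopy into finger moves and Whitney moves.
\item Use the dual, via the light bulb trick \cite{G20}, to remove all obstructions except those coming from the Dax invariant.
\item Realize each Dax class by self-referential tubes.
\end{itemize}
The conclusion is that $S_2$ is isotopic rel $\partial$ to the embedding $j$ obtained from $i=S_1$ by adding self-referential tubes.

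\textbf{Step 3: absorb the stabilization using $g'>g$.} Theorem \ref{thm:tech} only gives $i^{+}\simeq j^{+}$ after one internal stabilization of each. To remove the stabilization, I would show that each $S_k$ is isotopic rel $\partial$ to an internal stabilization of some genus-$(g'-1)$ surface, and that this can be arranged compatibly with the tube construction. The idea is the following.
\begin{itemize}
\item $H_1(S_k)\to H_1(X)\cong H_1(\Sigma)$ has rank at most $2g<2g'$, so there is a symplectic pair of curves $a,b$ on $S_k$ that are null-homologous in $X$, and on which we can try to compress.
\item The $\pi_1$ map to $\pi_1(X)=\pi_1(\Sigma)$ is surjective from $\pi_1(S_k)$ (a surface group of larger rank). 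Composing with a degree-one pinch, I would arrange that $a$ bounds a disk $D$ in the complement.
\item Make $D$ embedded by tubing its intersections with $S_k$ into copies of the dual $G$, which is possible because $G$ has trivial normal bundle and $G\cdot G=0$.
\item Framing corrections come from boundary twisting along $b$. The compressing disk then exhibits $S_k$ as a stabilization.
\end{itemize}

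The main obstacle is doing Step 3 so that the destabilized surfaces $S_1^-$ and $S_2^-$ still differ by self-referential tubes. Only then does Theorem \ref{thm:tech} (with the stabilization re-added) give $S_1\simeq S_2$ rel $\partial$. I would try to choose the handle $(a,b)$ in a small region disjoint from where the tubes are added, since the tubes are supported near $G$ and some arcs. Then $S_2=j(S_1^-)^{+}$ and $S_1=i(S_1^-)^{+}$, where $S_1^-$ is the destabilized $S_1$, and Theorem \ref{thm:tech} applies directly. Finally, gluing back $\nu G$ turns the isotopy rel boundary into an ambient isotopy of $\Sigma'_1$ to $\Sigma'_2$ in $X$.
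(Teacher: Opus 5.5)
\textbf{There is a genuine gap in Step 2, and Step 3 rests entirely on it.}

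\cite[Proposition 1.7]{KT24} is a statement about \emph{disks}. There is no surface version available to cite, and your three-bullet sketch skips exactly what is new in positive genus. To reduce to the disk case you would have to do two things, and neither is automatic:
(i) make $S_1$ and $S_2$ agree near a $1$-skeleton rel $\partial$;
(ii) show that the two leftover $2$-cells are homotopic rel boundary in the complement of that skeleton, where $\pi_2$ can differ from $\pi_2(M)$ by meridian spheres of the skeleton.

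Even your claim that the dual lets you make the homotopy rel boundary in $M$ fails as stated. $\pi_1(M)$ is free of rank $2g$ and surjects onto $\pi_1(\Sigma)$ with nontrivial kernel. Precomposing a parametrization of $S_k$ with a boundary-parallel Dehn twist conjugates the induced map $\pi_1(S_k)\to\pi_1(M)$ by the image of the boundary loop. That image is a nontrivial product of commutators in a free group with trivial center. The surface is unchanged, but it is no longer homotopic rel $\partial$ to the original. So some reparametrization argument is needed; the paper gets it from Edmonds' theorem together with Dehn--Nielsen.

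What you are assuming is precisely the genus-$g'$ analogue of Proposition~\ref{prop:2526}. The remark in the introduction says this needs separate careful analysis, and it is known from \cite{LWXZ25,Y26} only in genus $g$. By contrast, the compatibility issue you single out as the main obstacle is harmless. By general position a compressing disk misses the $1$-dimensional cores of the tubes, so if Step 2 were available, compression would commute with adding tubes.

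\textbf{How the paper avoids this.} It reverses your order and compresses \emph{all} $g'-g$ surplus handles, not just one, so that the only classification input needed is in genus $g$.
\begin{itemize}
\item In Lemma~\ref{lem:first isotopy}, Edmonds' theorem applied to ${\rm pr}\circ f$ on $S'$ rel $\partial$ gives, after reparametrization, curves $a_{g+1},\dots,a_{g'}$ that are trivial in $\pi_1(X\setminus G)$. Your homological rank count would not suffice, since Lemma~\ref{lem:gabai} needs null-homotopy in $X\setminus G$, not null-homology in $X$.
\item Lemma~\ref{lem:gabai} and Proposition~\ref{prop2.9} then give $\Sigma'_k\simeq\tilde f_k^{\bullet(g'-g)}$ with $\tilde f_k$ of genus $g$.
\item Hypothesis (2) is used only to see that $\tilde f_1$ and $\tilde f_2$ have the same $\pi_2$-datum: the degree onto the $S^2$ factor, resp.\ the intersection number with $\S_0$.
\item Proposition~\ref{prop:2526} standardizes their $1$-skeleta and shows the $2$-cells differ by some $\omega$.
\item Corollary~\ref{cor1}, with $n=g'-g\geq 1$ stabilizations placed in the $2$-cell, removes $\omega$.
\end{itemize}
Compressing all the way down matters because genus $g$ is the only genus where the self-referential-tube description you need in Step 2 is actually available.
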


Let $\mathscr{E}$ be the set consisting of the embedded genus-$g'$ surfaces in $X$ with the geometric dual $G$.
The following is a corollary of Theorem \ref{thm:main}. 

\begin{cor}\label{cor:main}
  There is a bijection 
  $$ \Phi: \mathscr{E}/\textit{ambient isotopy} \longrightarrow \ZZ. $$
\end{cor}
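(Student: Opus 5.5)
We will deduce the bijection from Theorem~\ref{thm:main} by finding a homotopy invariant of the surfaces in $\mathscr{E}$ that is a complete invariant and takes every integer value. Write $p\colon X\to\Sigma$ for the bundle projection. Since $G$ is a fiber and meets each $\Sigma'\in\mathscr{E}$ transversely and positively in one point, the composite $p|_{\Sigma'}\colon\Sigma'\to\Sigma$ has degree one. Also $H_2(X;\ZZ)\cong\ZZ^2$ is generated by $[G]$ and the class $[s]$ of a fixed reference section $s$ (a fixed section of the nontrivial bundle). Hence $[\Sigma']=[s]+k[G]$ for a unique $k\in\ZZ$. We define $\Phi(\Sigma')=k$. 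Ambient isotopy preserves homology classes, so $\Phi$ is well defined on $\mathscr{E}/\textit{ambient isotopy}$.

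\emph{Surjectivity.} For the trivial bundle $\Sigma\times S^2$, the graph of a map $\Sigma\to S^2$ of degree $d$ is an embedded section representing $[\Sigma]+d[G]$. This section meets the fiber $G$ exactly once, transversely and positively. In the nontrivial bundle we do the same thing fiberwise over the complement of a disk, using the reference section $s$. There the graph of a map of degree $d$ relative to $s$ is again an embedded section in the class $[s]+d[G]$. To reach genus $g'>g$, we perform $g'-g$ internal stabilizations in small balls disjoint from $G$. This does not change the homology class or the geometric dual. Hence every $k\in\ZZ$ is realized.

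\emph{Injectivity.} This is the main step. Suppose $\Phi(\Sigma'_1)=\Phi(\Sigma'_2)$. By Theorem~\ref{thm:main}, it suffices to find parametrizations of $\Sigma'_1$ and $\Sigma'_2$ that are homotopic as maps $F\to X$, where $F$ is the closed genus-$g'$ surface.

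First we handle the base. The maps $p\circ\Sigma'_1$ and $p\circ\Sigma'_2$ are degree-one maps $F\to\Sigma$. By the classical classification of degree-one maps between closed surfaces (Kneser--Edmonds), each is homotopic, after precomposition with a homeomorphism of $F$, to the standard pinch map collapsing $g'-g$ handles. So we may reparametrize so that both projections are homotopic to the same map $f$. We then lift this homotopy to $X$ using the homotopy lifting property of the fibration $p$. Now both embeddings are sections of the pulled-back $S^2$-bundle $f^*X\to F$.

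Next we compare these two sections. We first make them agree over the $1$-skeleton of $F$, which is possible because $S^2$ is simply connected. The remaining difference is a single obstruction class in $H^2(F;\pi_2(S^2))\cong\ZZ$. Pushing forward to $X$, this difference class equals the difference of the $[G]$-coefficients in $H_2(X)$, that is, $\Phi(\Sigma'_1)-\Phi(\Sigma'_2)$. Since this vanishes, the two sections are homotopic, and hence so are the two parametrized surfaces.

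The delicate points are the following. We must check that the difference obstruction really maps isomorphically to the $[G]$-coefficient; the pinch map has degree one, so the pushforward on $H_2$ is nonzero. We must also check that the homotopy lifting and the reparametrizations are compatible with one another. Once homotopy is established, Theorem~\ref{thm:main} gives an ambient isotopy, and so $\Phi$ is a bijection.
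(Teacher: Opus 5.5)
Your proposal is correct and follows essentially the paper's route. Both arguments reduce via Theorem~\ref{thm:main} to a homotopy classification, normalize the $\pi_1$-part with Edmonds' pinch-map theorem, and identify the remaining freedom with a $\ZZ$ worth of fiber degree. The paper uses the $\pi_2(X)$-component of the bijection in Proposition~\ref{prop:hmtp}; your $[G]$-coefficient of $[\Sigma']$ is an equivalent packaging that is manifestly invariant under isotopy and reparametrization, and it also makes surjectivity explicit.

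One small correction to your ``delicate point'': the difference class is detected in $H_2(X)$ not because the pinch map has degree one. It is detected because the two sections differ over the $2$-cell by a map into a fiber of degree $d$, which represents $d[G]$, and $[G]$ has infinite order since $[G]\cdot[s]=1$.
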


We briefly sketch the construction of the isotopy in Theorem \ref{thm:main}.
An initial isotopy can be chosen to make the two surfaces identical in a tubular neighbourhood of $G$. 
Then we apply Lemma \ref{lem:first isotopy} to isotope the surface to a genus-$g$ embedded surface after $g'-g$ internal stabilizations.
According to Proposition \ref{prop:2526}, after excluding the tubular neighbourhood of the genus-$g$ embedded surfaces in $X$,
we obtain two homotopic embeddings of surfaces with boundary.
Finally we construct the isotopy between these two embeddings via Theorem \ref{thm1}.
Corollary \ref{cor:main} can be proved directly by Theorem \ref{thm:main} and Proposition \ref{prop:hmtp}.

The isotopy problem of embedded surfaces with geometric dual was studied in the seminal work \cite{G20} of Gabai. 
Later in \cite{LWXZ25}, 
Lin, Wu, Xie and Zhang gave the complete isotopy classification of embedded genus-$g$ surfaces with a common geometric dual in $\Sigma \times S^2$. 
The similar problem for the nontrivial orientable $S^2$-bundle over $\Sigma$ was studied in \cite{Y26}.
This paper considers the isotopy problem for embedded genus-$g'$ surfaces in orientable $S^2$-bundle over $\Sigma$. 
The result for embedded high-genus surfaces is quite different from the genus-$g$ case.
For genus-$g$ surfaces, there are infinitely many surfaces, which are homotopic but pairwise non-isotopic, according to \cite{LWXZ25,Y26}.
However, for genus-$g'$ surfaces with common geometric dual, Theorem \ref{thm:main} shows that homotopy implies isotopy.

\begin{rem}
  We remark here that the procedure, utilizing Theorem \ref{thm:tech} to 
  discuss the isotopy classification problem of embedded high-genus surfaces, 
  can be applied to arbitrary $4$-manifolds. 
  However, a careful analysis is needed to prove an analogy of Proposition \ref{prop:2526}.
\end{rem}

The paper is organized as follows.
In Section \ref{sec:pre}, we review the notions of self-referential tubes, compressing disks and the light bulb lemma.
In Section \ref{sec:tech}, we introduce the notion of internal stabilizations of embeddings, and prove Theorem \ref{thm:tech}. 
In Section \ref{sec:main}, we are devoted to prove Theorem \ref{thm:main} and Corollary \ref{cor:main}.

\subsection*{Acknowledgement} 
The author would like to thank his advisor Yi Xie for his continuous guidance and assistance throughout the entire research process.
The author is partially supported by NSFC 12341105.

\subsection*{AI Declaration} 
AI was used only for text polishing, grammar checking, and guidance on creating figures using the TikZ package.

\section{Preliminaries}\label{sec:pre}

In this section, we are going to review several notions, including the self-referential tubes, compressing disks and the light bulb lemma.

We recall the definition of neat maps and embedding spaces.

\begin{defn} 
  A smooth map $i:X \rightarrow Y$ between two smooth manifolds with boundaries is \emph{neat}, 
  if $i^{-1}(\partial Y) = \partial X$, and $i$ is transverse to $\partial Y$.
\end{defn}

\begin{defn}\label{emb1}
  Let $X,Y$ be smooth manifolds with boundaries.
  \begin{itemize}
    \item[(1)] Define ${\rm Emb}(X,Y)$ to be the space consisting of all the neat embeddings $i$ of $X$ into $Y$, with $C^\infty$-topology.
    \item[(2)] Given $i_0 \in {\rm Emb}(X,Y)$, define ${\rm Emb}_\partial (X,Y)$ to be the subspace of ${\rm Emb}(X,Y)$ 
    consisting of the embeddings $i:X \rightarrow Y$ satisfying $i|_{\partial X} = i_0 |_{\partial X}$.
  \end{itemize}
\end{defn}

\subsection{The self-referential tubes}\label{subsec:srt}
We review the constructions of the self-referential tubes of Gabai in \cite{G21}.
Suppose that $M$ is a $4$-manifold with boundary.
Let $i:D^2 \hookrightarrow M$ be a neat embedding of $2$-disk in $M$.
We denote $I = [0,1]$.
Let $\gamma: I \rightarrow {\rm int}(M)$ be a path satisfying the following conditions as in Figure \ref{pic1}.
\begin{itemize}
  \item[(1)] It holds that $\gamma^{-1}(i(D^2)) = \{0\}$.
  \item[(2)] The only double point of $\gamma$ is $\gamma(t_0) = \gamma(1)$, where $0 < t_0 < 1$. 
\end{itemize}

\begin{figure}[htbp]
    \centering
    \begin{tikzpicture}[thick, scale=0.8, transform shape]
        \useasboundingbox (-1.5, -2.5) rectangle (8, 2.8);
        \draw (0,0) ellipse (1 and 2);
        \draw[white, line width=7pt] (1, -0.25) -- (1, 0.25);
        \draw (0,0) -- (4,0);
        \draw (4,0) 
            .. controls (4.8, 3.5) and (7.5, 2.5) .. (7.5, 1) 
            .. controls (7.5, -0.5) and (5, 0)    .. (4,0);
        \fill (0,0) circle (2.5pt);
        \fill (4,0) circle (2.5pt);
        \node[above] at (0, 1.1) {$i$};
        \node[above] at (6.2, 1.8) {$\gamma$};
        \node[below=4pt] at (4, 0) {$\gamma(t_0) = \gamma(1)$};
    \end{tikzpicture}
    \caption{The defining data for a self-referential tube} 
    \label{pic1}
\end{figure}

Denote the meridian sphere of $\gamma$ at $t=t_0$ as $m: S^2 \hookrightarrow M$. 
We may shrink the path $\gamma$ slightly at the endpoint $\gamma(1)$ to get a path $\tilde{\gamma}$, such that $\tilde{\gamma}(1) \in m(S^2)$.
After tubing $i$ to $m$ along $\tilde{\gamma}$, we obtain a neat embedding $i':D^2 \hookrightarrow M$.
Let $g$ be the element in $\pi_1(M,i(D^2))$ represented by the concatenation of $\gamma$ and $\overline{\gamma |_{[0,t_0]}}$.
Here the notation $\overline{\sigma}$ refers to the orientation-reversing path of a path $\sigma$. 

Since the homotopy of tubes in $4$-manifolds can be tamed to isotopy,
the isotopy class of $i'$ relative to the boundary only depends on the element $g$.
Thus, when considering the isotopy classification problem, 
we may denote $i'$ by $i_g$ for the sake of clarity, and call $i_g$ the embedding obtained by \emph{adding a self-referential tube} on $i$ along $g$. 
Figure \ref{pic2} is an illustration of the embedding $i_g$.
Note that $i$ and $i_g$ are homotopic relative to the boundary.
Regarding the relative Dax invariant, Gabai in \cite{G21} showed the equation
$$ \Dax (i,i') = [g] + [g^{-1}]. $$
For the definition of the Dax invariant, we refer the readers to \cite[Section 3]{G21}.

\begin{figure}[htbp]
    \centering
    \begin{tikzpicture}[thick, scale=0.8, transform shape]
        \draw (0,0) ellipse (1.2 and 2.5);
        \draw[white, line width=7pt] (1.2, -0.3) -- (1.2, 0.3);
        \draw (-0.3, 0.4) to[out=-20, in=180] (1, 0.2) -- (4.8, 0.2);
        \draw (-0.3, -0.4) to[out=20, in=180] (1, -0.2) -- (4.8, -0.2);
        \draw [white, line width=8pt] 
            (4.0, 1.6) -- (4.0, -0.5);
        \draw (4.8, -0.2) 
            .. controls (6.5, -0.2) and (6.8, 1.5) .. (6.0, 2.0) 
            .. controls (5.2, 2.5) and (4.5, 2.0) .. (4.3, 1.5);
        \draw (4.3, 1.5)
            .. controls (4.2, 1.2) and (4.2, 0.8) .. (4.2, 0.5);
        \draw (4.2, 0.5)
            .. controls (3.7, 0) and (4.1, -0.6) .. (4.6, -0.4);
        \draw (4.5, 0.8)
            .. controls (4.5, 1.5) and (5.5, 2.2) .. (5.8, 1.5)
            .. controls (6.0, 1.0) and (5.5, 0.2) .. (4.8, 0.2);
        \draw (4.5, 0.8)
            .. controls (4.5, 0.6) and (4.6, 0.5) .. (4.7, 0.4);
        \node[above] at (2.5, 0.4) {$i_g$};
    \end{tikzpicture}
    \caption{The definition of a self-referential tube} 
    \label{pic2}
\end{figure}

\subsection{Compressing disks}
We review the notions of $G$-inessential embeddings and compressing disks in \cite{G20}.
Suppose $M$ is a $4$-manifold (may with boundary), and $\Sigma$ is a compact surface.

\begin{defn}
  Let $i: \Sigma \hookrightarrow M$ be a neat embedding.
  An embedding $G: S^2 \hookrightarrow M$ is called a \emph{geometric dual} of $i$, if $i$ and $G$ intersect positively and transversely at only one point $p \in \Sigma$.
  Furthermore, the embedding $i:\Sigma \hookrightarrow M$ is called \emph{$G$-inessential}, 
  if the induced map $i_*: \pi_1(\Sigma \setminus p) \rightarrow \pi_1(M \setminus G)$ is trivial.
\end{defn}

\begin{defn}\label{def:cpd}
  For a neat embedding $i:\Sigma \hookrightarrow M$, 
  a \emph{compressing disk} for $i$ is an embedded $2$-disk $D \subset {\rm int}(M)$, 
  satisfying the following conditions.
\begin{itemize}
  \item[(1)] It holds that $D \cap i(\Sigma) = \partial D$.
  \item[(2)] The section of the normal bundle $\nu(\partial D \subset i(\Sigma))$ can be extended to a section of the normal bundle $\nu(D \subset M)$.
\end{itemize}
\end{defn}

Note that, given an embedding $i:\Sigma \hookrightarrow M$ with a compressing disk $D$,
we may exclude a tubular neighbourhood of $\partial D$ in $i(\Sigma)$, 
and glue two parallel copies of $D$ back to obtain a new embedding $i_D: \Sigma_- \hookrightarrow M$.
Here $\Sigma_-$ is the closed surface with genus $g(\Sigma_-) = g(\Sigma) - 1$.
We call $i_D$ the embedding obtained by compressing $i$ along $D$.
Analogously, given $n$ disjoint compressing disks $D_1, \cdots, D_n$ of $i$,
we denote the obtained embedding after compressing along these disks by $i_{D_1,\cdots,D_n}$.

Regarding the existence of compressing disks, we have the following result according to \cite[Lemma 9.2, Lemma 9.3]{G20}.

\begin{lem}\label{lem:gabai}
  Let $i:\Sigma \hookrightarrow M$ be an embedding, admitting a geometric dual $G:S^2 \hookrightarrow M$ at $p \in \Sigma$.
  Suppose that $\alpha_1,\cdots,\alpha_n$ are disjoint simple closed curves in $i({\rm int}(\Sigma))$ satisfying that
  \begin{itemize}
    \item[(1)] for any $k$, $\alpha_k$ is disjoint from the geometric dual $G$;
    \item[(2)] the complement $i(\Sigma) \setminus (\alpha_1 \cup \cdots \cup \alpha_n)$ is connected;
    \item[(3)] for any $k$, $\alpha_k$ is homotopically trivial in $M \setminus G(S^2)$.
  \end{itemize}
  Then there are disjoint compressing disks $D_1,\cdots,D_n$ for $i$, such that for any $k$, $\partial D_k = \alpha_k$.
\end{lem}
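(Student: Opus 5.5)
The statement to be proved is Lemma~\ref{lem:gabai}, which the paper quotes from \cite[Lemma 9.2, Lemma 9.3]{G20}. I would follow Gabai's construction, treating one curve at a time and then making the disks disjoint.

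\emph{Step 1 (a singular disk in the complement of $G$).} By hypothesis (3), each $\alpha_k$ bounds a map of a disk into $M\setminus G(S^2)$. After general position it is an immersed disk $D_k'$ in ${\rm int}(M)$. Its interior meets $i(\Sigma)$ transversely in finitely many points, has finitely many self-intersections, and meets the other $D_l'$ in finitely many points.

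\emph{Step 2 (removing intersections with $i(\Sigma)$).} At each point where ${\rm int}(D_k')$ meets $i(\Sigma)$, I tube $D_k'$ into a parallel copy of $G$. The tube runs along an arc in $i(\Sigma)$ from that point to $p$. Hypothesis (2) lets me choose these arcs in $i(\Sigma)\setminus(\alpha_1\cup\dots\cup\alpha_n)$, and they can be made pairwise disjoint away from $p$. This is the light-bulb trick, and it uses the fact that the normal bundle of $G$ is trivial (the $G$ relevant in this paper has trivial normal bundle). The disks then meet $i(\Sigma)$ only along their boundaries. They may now intersect $G$, but the lemma does not require them to avoid $G$.

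\emph{Step 3 (framing).} Boundary twisting along $\alpha_k$ changes the relative Euler number of the normal framing by $\pm 1$, at the cost of new self-intersections. I use it to arrange condition (2) of Definition~\ref{def:cpd}.

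\emph{Step 4 (removing self- and mutual intersections).} For a self-intersection of $D_k'$, or an intersection between $D_k'$ and $D_l'$, I pick one of the two sheets through the point. I push the intersection along an arc in that sheet out to its boundary curve, past the curve and onto $i(\Sigma)$. This finger move trades the double point for an intersection with $i(\Sigma)$, which I then remove by tubing into a copy of $G$ as in Step~2. Each new copy of $G$ is taken as a parallel push-off. Since $G\cdot G=0$ and each copy meets $i(\Sigma)$ once, distinct copies are disjoint. However, a copy of $G$ will hit the other disks wherever they meet $G$. So, before tubing, I first remove the $D_l\cap G$ intersections by finger-moving them off $G$ along $G$ and across its intersection point with $i(\Sigma)$, which turns them into intersections with $i(\Sigma)$.

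\emph{Main obstacle.} The main difficulty is this bookkeeping: making sure that eliminating one kind of intersection does not create new ones elsewhere. I would order the moves so that intersections with $G$ are cleared first and every tube uses disjoint push-offs of $G$. The fact that $\pi_1$ of the surface complement of the arcs is controlled by hypothesis (2) is what lets all the guiding arcs be chosen disjoint.
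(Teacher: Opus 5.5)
Your strategy is the standard argument of \cite{G20}, which the paper cites without giving a proof of its own. It takes an immersed null-homotopy in $M\setminus G$, pushes double points off the boundary, removes intersections with $i(\Sigma)$ by tubing into parallel copies of $G$ along disjoint arcs in the connected surface $i(\Sigma)\setminus\bigcup_l\alpha_l$ (the Norman trick), and fixes the framing by boundary twisting. You are also right that it needs $G\cdot G=0$: this is part of Gabai's definition of a dual, and it holds for both duals used in this paper.

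As written, however, one step fails. In Step 4 you propose to remove points of $D_l\cap G$ by a finger move along $G$ through $p$. Since $G$ is closed, there is no edge to push across. The move only relocates the intersection and adds two intersections with $i(\Sigma)$; indeed $D_l\cdot G$ is invariant under homotopy rel $\partial D_l$. The step is also unnecessary, because the disks never meet $G$, contrary to your remark after Step 2:
\begin{enumerate}
\item The disks of Step 1 lie in $M\setminus G$, so they miss a tubular neighbourhood $\nu G$.
\item Take every copy of $G$ to be a distinct parallel push-off inside $\nu G$. These copies are pairwise disjoint and disjoint from $G$ because the normal bundle is trivial.
\item Run each tube to the foot of its own copy near $p$, not to $p$ itself.
\item Route all finger arcs through the original parts of the disks.
\end{enumerate}
With these choices no disk ever meets $G$ or another disk's copy of $G$, so the step can simply be deleted.

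Two side effects are also misdescribed. Boundary twisting does not create self-intersections; it creates one intersection point of ${\rm int}(D_k)$ with $i(\Sigma)$ near $\alpha_k$. Pushing a double point across $\alpha_k$ creates a pair of intersection points with $i(\Sigma)$ of opposite sign, not a single one. The same tubing absorbs both.

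You then still need to explain why Step 4 does not undo the framing fixed in Step 3. The finger moves are regular homotopies rel boundary, and tubing into a copy of $G$ changes the relative Euler number by $\pm G\cdot G=0$, so the framing is preserved. The cleanest order is: boundary-twist, push all double points off the boundary, and only then tube off all intersections with $i(\Sigma)$ at once.
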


\subsection{The light bulb lemma}
When considering the isotopy of embedded surfaces with a geometric dual,
there is a useful lemma named the light bulb lemma introduced in \cite{G20}.
Let $M$ be a $4$-manifold with boundary, and let $\Sigma$ be a surface with boundary. 

\begin{lem}\label{lem:lbl}\cite[Lemma 2.3]{G20}
  Let $i: \Sigma \hookrightarrow M$ be a neat embedding with a boundary geometric dual $G: S^2 \hookrightarrow \partial M$. 
  We denote the intersection of $i(\Sigma)$ and $G(S^2)$ by $p$.
  We assume the following conditions hold as in Figure \ref{pic3}.
  \begin{itemize}
    \item[(1)] Suppose $\alpha_0$ and $\alpha_1$ are two embedded arcs in $M$ sharing the same endpoints. 
    The concatenation of the arcs $\alpha_0$ and $\overline{\alpha_1}$ bounds an embedding disk $E$ in $M$.
    The disk $E$ and $i(\Sigma)$ intersect transversely at a point $q$.
    \item[(2)] There is an isotopy $\alpha_t: I \rightarrow M$ defined by swiping the arc from $\alpha_0$ to $\alpha_1$.
    After applying the arc-pushing map, we obtain a smooth ambient isotopy $f_t: M \rightarrow M$ for $t \in [0,1]$.
    satisfying that $f_0 = {\rm id}_{M}$, and $f_1$ sends $\alpha_0$ to $\alpha_1$, and the ambient isotopy $f_t$ supports on a neighbourhood $N$ of $E$.
    \item[(3)] We identify a closed neighbourhood of $\alpha_0$ to $B^3 \times I$.
    Suppose that $i(\Sigma) \cap N = C \cup B$.
    Here $B$ lies in ${\rm int}(B^3) \times I$, and $C$ is an embedded disk in $i(\Sigma)$ containing $q$. 
    \item[(4)] We suppose that $p$ and $q$ lie in the same component of $i(\Sigma) \setminus B$.
  \end{itemize}

  \begin{figure}[htbp]
    \centering
    \begin{tikzpicture}[thick, >=stealth]
    \begin{scope}[gray, ->]
        \draw (0.25, -0.85) -- (-1.3, -0.35);
        \draw (0.7, -0.3) -- (-1.15, 0.3); 
        \draw (1.0, 0.3) -- (-0.8, 0.9);
        \draw (1.1, 0.9) -- (-0.3, 1.35);
        \draw (1.15, 1.4) -- (0.35, 1.65);
    \end{scope}
    \draw (-4.5, 0) -- (-1.5, 0);
    \node at (2.3, -0.05) {$\cdots$};
    \draw (2.9, 0) -- (4.2, 0);
    \def\B{-1.4, -1.5}
    \def\T{1.1, 1.9}
    \draw[red] (\B) .. controls (-1.8, -0.2) and (-1.2, 1.5) .. (\T);
    \draw[red] (\B) .. controls (0.2, -1.8) and (1.6, 0.2) .. (\T);
    \draw [white, line width=6pt] 
    (-0.8, 0) -- (1.5, 0);
    \draw (-1.3, 0) -- (1.8, 0);
    \def\Cx{4.2}
    \def\Cy{0}
    \def\R{1.0}
    \draw[blue] (\Cx, \Cy) circle (\R);
    \draw[blue] (\Cx, \Cy-\R) arc (-90:90:0.25cm and \R cm);
    \draw[blue, dashed] (\Cx, \Cy+\R) arc (90:270:0.25cm and \R cm);
    \fill[red] (\B) circle (2.5pt); 
    \fill[red] (\T) circle (2.5pt); 
    \fill (-0.2, 0) circle (2.5pt); 
    \fill (3.2, 0) circle (2.5pt); 
    \node[above] at (-3.5, 0.1) {$i$};
    \node[left, red] at (-1.0, 0.9) {$\alpha_1$};
    \node[right, red] at (0.7, -0.7) {$\alpha_0$};
    \node[above, gray] at (-0.2, 1.5) {$f_t$};
    \node[below, gray] at (-0.8, -0.7) {$E$};
    \node[above] at (-0.2, 0.1) {$q$};
    \node[above left] at (3.2, 0.05) {$p$};
    \node[above, blue] at (\Cx, \R + 0.1) {$G$};
    \end{tikzpicture}
    \caption{The conditions of the light bulb lemma}
    \label{pic3}
\end{figure}

  Then there is an ambient isotopy $g_t: M \rightarrow M$ for $t \in [0,1]$, satisfying the following.
  \begin{itemize}
    \item[(a)] It holds that $g_0 = {\rm id}_M$.
    \item[(b)] The restriction of $g_1$ to $i(\Sigma) \setminus B$ is the identify map, and the restriction of $g_1$ to $B$ is $f_1|_{B}$.
    \item[(c)] For any $t \in [0,1]$, the diffeomorphism $g_t$ fixes a neighbourhood of $G$ and $i(\Sigma) \setminus B$ pointwise.
  \end{itemize}
\end{lem}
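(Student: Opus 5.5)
The plan is to convert the ambient isotopy $f_t$, which drags $B$ through $C$ at $q$, into an isotopy of the thin tube $B^3\times I$ around the arc that avoids the fixed part $i(\Sigma)\setminus B$ and a neighbourhood of $G$. Set $W = M\setminus \big(\nu(i(\Sigma)\setminus B)\cup \nu G\big)$. The first step is to observe that we only need to move $\alpha_0$ to $\alpha_1$ inside $W$, carrying along a normal framing. Since $B\subset {\rm int}(B^3)\times I$, any isotopy of the framed arc, i.e.\ of the embedding $B^3\times I\hookrightarrow W$ fixed near the endpoints, extends to an ambient isotopy $g_t$ supported in $W$. Such an isotopy carries $B$ along and fixes $i(\Sigma)\setminus B$ and a neighbourhood of $G$ pointwise. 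This gives (a) and (c), and (b) holds provided the final embedding of $B^3\times I$ agrees with $f_1$ on it.

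The second step is to show that $\alpha_0$ and $\alpha_1$ are homotopic rel endpoints in $W$. The loop $\alpha_0\cdot\overline{\alpha_1}$ bounds $E$, which meets $i(\Sigma)\setminus B$ only at $q\in C$. Removing a small disk around $q$ from $E$ shows that this loop is homotopic in $W$ to a meridian circle $\mu_q$ of $i(\Sigma)$ at $q$, connected to the arcs by a tail. By condition (4), there is an arc $\delta$ from $q$ to $p$ in $i(\Sigma)\setminus B$. Sliding $\mu_q$ along $\delta$ gives a meridian $\mu_p$ of $i(\Sigma)$ at $p$. Since $G$ meets $i(\Sigma)$ transversely in exactly the point $p$, the circle $\mu_p$ bounds a parallel copy of $G$ minus a small disk, pushed off $G$ into $W$. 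Hence the loop is null-homotopic in $W$.

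The third step upgrades this to an isotopy. Arcs in a $4$-manifold are $1$-dimensional, so a generic homotopy rel endpoints has no self-crossings ($1+1+1<4$). Therefore homotopic arcs are isotopic rel endpoints, and $\alpha_0$ is isotopic to $\alpha_1$ in $W$. Concretely, this isotopy can be realized by modifying the sweep $\alpha_t$: when the arc is about to cross $C$ at $q$, instead push the relevant piece of arc along $\delta$ and over the pushed-off copy of $G$. This is the light bulb trick.

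The main obstacle is the framing in the second and third steps: extending the arc isotopy to an isotopy of the tube $B^3\times I$ whose endpoint agrees with $f_1$ on $B$. Two framings of the arc rel endpoints differ by an element of $\pi_1(SO(3))=\ZZ/2$, so an extra twist might appear. I would first try to show that the twist vanishes by comparing with $f_t$, whose framing is transported along $E$, and checking that the detour around $G$ contributes an even twist. If that comparison fails, I would instead insert a full twist of the arc near an endpoint, performed by spinning in the $B^3$ factor, which changes the framing by the generator. Once the framings agree, the ambient extension of the framed isotopy gives $g_1|_B=f_1|_B$, and (b) follows.
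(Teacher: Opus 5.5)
Your overall strategy is the standard one. The paper does not prove this lemma; it quotes \cite[Lemma 2.3]{G20}, and Gabai's argument is the light bulb trick you describe at the end of your third step. Concretely: tube $E$ along an arc $\delta\subset i(\Sigma)\setminus B$ from $q$ to a copy $G'$ of $G$ pushed into the interior through a collar. This gives an embedded disk $E'$ with $\partial E'=\alpha_0\cup\overline{\alpha_1}$ whose interior misses $i(\Sigma)\setminus B$ and a neighbourhood of $G$. Then push $B^3\times I$ across $E'$ instead of $E$.

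The gap is the framing step. You flag it but leave it open, and it is not cosmetic, since (b) is a pointwise statement about $B$. Your treatment of it fails in two places.

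First, the general-position step (homotopic arcs are isotopic) produces an isotopy in $W$ with no control of the framing. Two isotopies rel endpoints from $\alpha_0$ to $\alpha_1$ transport the framing of $\alpha_0$ to framings of $\alpha_1$ that differ by $\langle w_2(M),S\rangle\in\ZZ/2=\pi_1(SO(3))$. Here $S$ is the sphere swept out by the resulting loop of arcs, and $W$ may well contain spheres on which $w_2(M)\neq 0$.

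Second, the fallback twist is not available. When $B$ is attached to $i(\Sigma)\setminus B$ near ${\rm int}(B^3)\times\partial I$ (e.g.\ a tube, as in the application in Theorem \ref{thm1}), the isotopy must be fixed near the ends. A full twist inserted there would then be a null-homotopy rel endpoints of the generator of $\pi_1(SO(3))$, which does not exist. Put differently, a local spin in a $4$-ball sweeps no sphere with $w_2\neq 0$, so it cannot change the framing class.

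What closes the gap is the comparison you proposed first, carried out for the explicit disk $E'$. It succeeds, and it is the one place where the hypothesis $G\subset\partial M$ is genuinely used.

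The framings on $\alpha_1$ obtained by pushing across $E$ and across $E'$ differ by $\langle w_2(M),[E\cup\overline{E'}]\rangle$. Since $E'$ is $E$ tubed to $G'$, the sphere $E\cup\overline{E'}$ is homotopic to $\pm G'$ joined by a tether. So this number is $\langle w_2(M),[G]\rangle\equiv e(\nu_M G)\bmod 2$. It vanishes because $\nu_M G\cong\nu_{\partial M}G\oplus\mathbb{R}$ is trivial.

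Hence the framed embedding of $B^3\times I$ produced by the push across $E'$ agrees with $f_1|_{B^3\times I}$ after a small isotopy supported near $\alpha_1$, away from $i(\Sigma)\setminus B$. This gives $g_1|_B=f_1|_B$. With this argument replacing the general-position step and the fallback, your proof goes through.
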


\section{Resolving Self-referential Tubes via an Internal Stabilization}\label{sec:tech}

In this section, we will state and prove the main theorem of this paper. 
This theorem shows that, by utilizing an internal stabilization, 
we can resolve any self-referential tubes via isotopy.
Let $M$ be a $4$-manifold with boundary, and let $\Sigma$ be a surface with boundary. 

\subsection{Embeddings after taking internal stabilizations}

In this subsection, we review the notion of the embedding after taking internal stabilizations.

\begin{defn}\label{def:mkpt}
  Suppose $i: \Sigma \hookrightarrow M$ is a neat embedding, and let $p \in \operatorname{int}(\Sigma)$. 
  As shown in Figure \ref{pic4}, we can attach a tube to $i(\Sigma)$ 
  such that the tube is strictly contained within a small $4$-ball neighborhood of $i(p)$ in $M$, 
  and its intersection with $i(\Sigma)$ lies entirely within a small $2$-disk neighborhood of $i(p)$.
  We refer to this procedure as \emph{taking an internal stabilization} at $p$ on the embedding $i$, 
  and denote the resulting embedding by $i^p: \Sigma_+ \hookrightarrow M$,
  where $\Sigma_+$ is the surface obtained by attaching a $1$-handle to $\Sigma$.
  Furthermore, given $n$ distinct points $p_1, \dots, p_n \in \operatorname{int}(\Sigma)$,
  we can define the embedding $i^{p_1,\dots,p_n}$ named \emph{taking $n$ internal stabilizations on $i$} 
  in an analogous manner by locally attaching $n$ tubes at the respective points.
\end{defn}

\begin{figure}[htbp]
    \centering
    \begin{minipage}{0.45\textwidth}
        \centering
        \begin{tikzpicture}[thick]
            \useasboundingbox (-3.2, -1.2) rectangle (3.2, 1.8);
            \draw (0,0) ellipse (3 and 1);
            \node at (-2.5, 0) {$i$};
            \draw[dashed, rotate around={-5:(0.8, -0.1)}] (0.8, -0.1) ellipse (0.9 and 0.6);
            \fill (0.7, -0.1) circle (2pt);
            \node[right] at (0.75, -0.2) {$i(p)$};
        \end{tikzpicture}
    \end{minipage}\hfill
    \begin{minipage}{0.45\textwidth}
        \centering
        \begin{tikzpicture}[thick]
            \useasboundingbox (-3.2, -1.2) rectangle (3.2, 1.8);
            \draw (0,0) ellipse (3 and 1);
            \draw [white, line width=10pt] (0.1, 0.9) -- (1.5, 0.9);
            \node at (-2.5, 0) {$i^p$};
            \draw[dashed, rotate around={-5:(0.8, -0.1)}] (0.8, -0.1) ellipse (0.9 and 0.6);
            \draw (0.1, -0.1) .. controls (0.1, 2.2) and (1.5, 2.2) .. (1.5, -0.2);
            \draw (0.55, -0.1) .. controls (0.55, 1.4) and (1.0, 1.4) .. (1.0, -0.2);
        \end{tikzpicture}
    \end{minipage}
    \caption{Taking an internal stabilization at $p$ on $i$.}
    \label{pic4}
\end{figure}

\begin{rem}\label{rem:mkpt}
  We remark that for different choices of the points $p_1, \dots, p_n$ in $\operatorname{int}(\Sigma)$,
  the isotopy classes (relative to the boundary) of the embeddings $i^{p_1,\dots,p_n}$ are identical. 
  This is because we can always isotope the these points to a standard configuration in $\operatorname{int}(\Sigma)$.
  Consequently, when considering only the isotopy class, we denote the embedding after $n$ internal stabilizations on $i$ simply by $i^{\bullet n}$.
\end{rem}

\begin{rem}\label{rem:mkpt2}
  Note that taking an internal stabilization is a local operation.
  Therefore, given isotopic embeddings $i_1$ and $i_2$, for positive integer $n$,
  the embeddings $i_1^{\bullet n}$ and $i_2^{\bullet n}$ are still isotopic. 
\end{rem}

As an application of the light bulb lemma, 
Gabai in \cite{G20} showed that an embedding with a compressing disk is isotopic to an internal stabilization on the embedding after compressing.

\begin{prop}\label{prop2.9}
  Suppose $i: \Sigma \hookrightarrow M$ is a neat embedding with a boundary geometric dual.
  Let $D_1, \cdots, D_n$ be $n$ disjoint compressing disks of $i$.
  Then the embedding $i$ is isotopic to the embedding $(i_{D_1, \cdots, D_n})^{\bullet n}$ relative to the boundary.
\end{prop}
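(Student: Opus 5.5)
We argue by induction on $n$. The inductive step reduces to the case $n=1$, because the remaining compressing disks $D_2,\dots,D_n$ are disjoint from $D_1$. They therefore survive as compressing disks of $i_{D_1}$, and the boundary geometric dual is untouched since the $D_k$ lie in ${\rm int}(M)$. Once we know $i\simeq (i_{D_1})^{\bullet 1}$, we can apply the inductive hypothesis to $i_{D_1}$ with the disks $D_2,\dots,D_n$. Remark \ref{rem:mkpt2} lets us stabilize the resulting isotopy, and Remark \ref{rem:mkpt} lets us put all $n$ tubes in standard position, which gives $i\simeq (i_{D_1,\dots,D_n})^{\bullet n}$.

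For $n=1$, write $D=D_1$. The key geometric picture is that $i$ itself is obtained from $i_D$ by attaching a tube along an arc, namely the cocore arc $\alpha_0$ of the compression, which runs across the disk $D$ region. Concretely, $i(\Sigma)$ is recovered from $i_D(\Sigma_-)$ by removing two small disks, centered at the points where the two parallel copies of $D$ meet a short arc $\alpha_0$ transverse to them, and gluing in the boundary of a thin neighbourhood of $\alpha_0$. The arc $\alpha_0$ runs from one parallel copy of $D$ to the other through the thin slab between them, with both endpoints on $i_D(\Sigma_-)$. The internal stabilization $(i_D)^{\bullet 1}$ is the same tubing construction performed along a short trivial arc $\alpha_1$ that lies in a small ball near a point of $i_D(\Sigma_-)$. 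So it suffices to isotope the tube along $\alpha_0$ to a tube along a trivial arc while fixing the rest of the surface, with everything rel boundary.

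To do this, first slide one endpoint of $\alpha_0$ along $i_D(\Sigma_-)$: go around the parallel copy of $D$, then across the annular part of $i(\Sigma)$ that was not removed, until the two endpoints are close together. Tube-sliding along the surface is an isotopy. The new arc, together with a short arc on the surface near its endpoints, bounds a disk $E$. We can take $E$ to be essentially a parallel copy of $D$ pushed off, so $E$ meets $i_D(\Sigma_-)$ transversely in a single point $q$ near $\partial D$; this is where the framing condition (2) of Definition \ref{def:cpd} enters. We now apply the light bulb lemma (Lemma \ref{lem:lbl}) with $B$ the thin tube around the moving arc and $C$ a small disk of the surface around $q$. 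Swiping across $E$ carries $\alpha_0$ to the trivial arc $\alpha_1$, and the lemma converts this into an ambient isotopy that fixes the surface outside $B$ and moves $B$ to the tube along $\alpha_1$. Condition (4) holds because $\Sigma_-$ minus the tube is connected; it contains both $p$ and $q$ since $\Sigma$ is connected and compressing along a nonseparating curve keeps it connected. If $\partial D$ separates, we instead check that $p$ and $q$ lie on the same side, choosing which side of $D$ to push $E$ toward.

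The main obstacle is setting up the light bulb configuration precisely. One has to check that, after the endpoint slide, the swept disk $E$ really is embedded and meets the surface only in one transverse point $q$ away from $B$. One also has to check that the tube $B$ stays inside the $B^3\times I$ neighbourhood of the arc throughout, as required by hypothesis (3). The first thing I would try is to take $E$ as the parallel push-off of $D$ given by the extended normal section. Its boundary then runs along the displaced copy of $\partial D$, so the only intersection with the surface is where $E$ passes through the other sheet of the compression near the tube foot. This gives the single point $q$.
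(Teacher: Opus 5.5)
Your strategy matches the paper's: regard $i$ as $i_D$ with a tube along the cocore arc $\alpha_0$, bring both feet into one small disk of the compressed surface, and unhook the tube with Lemma \ref{lem:lbl}. Inducting on $n$ instead of treating the tubes simultaneously changes nothing. The gap is the disk $E$.

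A push-off of $D$ is bounded by a cross-sectional circle of the tube. Lemma \ref{lem:lbl} needs something else: a disk bounded by the tube's core arc closed up along the surface. The feet of $\alpha_0$ lie on the two different sheets $D^{\pm}$. Because the annulus $A=\nu(\partial D)\subset i(\Sigma)$ has been removed, there is no "annular part that was not removed" joining them near $D$. The slide path must therefore run through $i(\Sigma)\setminus A$ from one side of $\partial D$ to the other. In the slab $D\times[-1,1]$, whose boundary is $D^+\cup A\cup D^-$, $\alpha_0$ is homotopic rel endpoints to an arc crossing $A$ once. Hence the slid arc, closed up by a short arc on the surface, is homotopic in $M$ to a curve $\beta\subset i(\Sigma)$ meeting $\partial D$ exactly once. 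The light bulb lemma absorbs crossings of the arc with the surface, but not a nontrivial class in $\pi_1(M)$. So it applies only if some such dual curve $\beta$ is null-homotopic in $M$.

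Nothing in the hypotheses gives this, and without it the statement is false. Take $M=(S^2\times D^2)\#(S^1\times S^3)$ with boundary dual $S^2\times\{y\}$. Let $i(\Sigma)$ be the neat disk $\{x\}\times D^2$ tubed to the torus $S^1\times c$, where $c\subset S^3$ is an unknot, and let $D=\{t_0\}\times d$ with $\partial d=c$. Then $i_D$ is that disk tubed to the sphere $\partial([t_0+\epsilon,t_0+2\pi-\epsilon]\times d)$, so $(i_D)^{\bullet 1}$ has trivial image on $\pi_1$. But $i(\Sigma)$ contains $S^1\times\{{\rm pt}\}$, a generator of $\pi_1(M)\cong\ZZ$. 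So the two embeddings are not even homotopic rel boundary.

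The paper's own proof invokes the light bulb lemma at exactly this point without addressing the null-homotopy, so it has the same lacuna. It is harmless in the paper's only application, Lemma \ref{lem:first isotopy}, where the curves $b_i$ dual to the compressing curves $a_i$ satisfy $\phi_*(b_i)=1$. Gabai's $G$-inessential hypothesis in \cite{G20} plays the same role.

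Add the hypothesis that each $\partial D_k$ has a dual curve $\beta_k$, disjoint from the other $\partial D_j$ and null-homotopic in $M$; then your argument works. Slide the foot along $\beta_k$. Take a generic homotopy rel endpoints from the slid arc to a trivial arc; this is an isotopy of arcs crossing the surface transversely finitely often. Remove each crossing with Lemma \ref{lem:lbl}, whose condition (4) uses that the $\partial D_k$ are jointly nonseparating. Drop your separating case rather than patching it: there $i_D$ is disconnected, so the conclusion fails for trivial reasons.
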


The proof of Proposition \ref{prop2.9} is essentially given in the proof of \cite[Theorem 9.7]{G20}.
For the sake of clarity, we restate its proof here.

\begin{proof}
  The original embedding $i$ can be recovered from the compressed surface $i_{D_1, \cdots, D_n}$ by attaching $n$ tubes.
  Each of these tubes, which can be identified with $S^1 \times I$, extends to a solid tube $D^2 \times I$ in $M$.
  These solid tubes intersect the compressed surface exactly at their bases $D^2 \times 0$ and $D^2 \times 1$.
  Furthermore, by construction, these $n$ tubes are pairwise disjoint.

  After applying an ambient isotopy, we may assume that there are $n$ small, pairwise disjoint $4$-balls, each of which intersects the compressed surface in a single standard disk that contains the bases of a single solid tube.
  By the light bulb lemma, these solid tubes can be isotoped to be 3-dimensional neighborhoods of tiny standard arcs in these disjoint $4$-balls with endpoints on the compressed surface.
  That is because the embedding $i$ has a boundary geometric dual and the complement of the tubes in the compressed surface is connected.

  After this isotopy, each attached tube is strictly contained within a small $4$-ball neighborhood of a point, and its intersection with the compressed surface lies entirely within a small $2$-disk neighborhood. 
  By Definition \ref{def:mkpt}, this local topological configuration is exactly the procedure of taking internal stabilization on the embedding. 
  Since this procedure applies to each of the $n$ disjoint tubes locally, the embedding $i$ is isotopic to $(i_{D_1, \cdots, D_n})^{\bullet n}$ relative to the boundary.
\end{proof}

\subsection{Resolving self-referential tubes}

Now we are ready to restate Theorem \ref{thm:tech}, and prove the following result.

\begin{thm}\label{thm1}
  Suppose $i: \Sigma \hookrightarrow M$ is a neat embedding with a boundary geometric dual.
  Let $g$ be an element of $\pi_1(M)$.
  Then the two embeddings $i_g^{\bullet 1}$ and $i^{\bullet 1}$ are isotopic relative to the boundary.
\end{thm}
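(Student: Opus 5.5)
The strategy is to show that both stabilized embeddings are isotopic, relative to the boundary, to one common model. The tool is Proposition \ref{prop2.9}: it lets us read an internally stabilized surface as a surface with a compressing disk, and the reverse.

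\textbf{Step 1: make the tube visible in $i_g$.} I would first isotope the stabilization on $i_g$ so that it sits on the self-referential tube itself. By Remark \ref{rem:mkpt} the stabilization point can be moved anywhere in $\operatorname{int}(\Sigma)$. I place it on the part of $i_g$ coming from the meridian sphere $m$, near the point where the tube along $\tilde\gamma$ attaches to $m$. Then $i_g^{\bullet 1}$ is the disk $i(\Sigma)$ tubed to two objects: the sphere $m$, along $\tilde\gamma$, and a small trivial handle on $m$.

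The trivial handle on $m$ is a small tube $T$ whose ends lie on $m$. I would slide one foot of $T$ along the tube $\tilde\gamma$ back onto $i(\Sigma)$. After the slide, $i_g^{\bullet 1}$ is presented as $i(\Sigma)$ with two tubes. The first is the original tube to $m$. The second is a tube $T'$ that starts on $i(\Sigma)$, follows $\tilde\gamma$, and returns to $m$.

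\textbf{Step 2: exhibit a compressing disk.} The genus was added by the stabilization, so I look for a curve $\alpha$ on $i_g^{\bullet 1}$ that is nonseparating and meets the meridian circle of the self-referential tube once. The key point is that $m$ bounds a $3$-ball $B_m$ (a normal fiber of $\gamma$) punctured by $\gamma$. The stabilization handle supplies the extra room needed to cap off. Concretely, I would take $\alpha$ to be the curve running over the handle $T$ and then back through the tube. Together with a meridian disk of the $\gamma$-arc near $\gamma(t_0)$, this gives a disk whose boundary is $\alpha$.

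To make this disk embedded, I would check the hypotheses of Lemma \ref{lem:gabai}. We need $\alpha$ disjoint from $G$, the complement of $\alpha$ connected, and $\alpha$ null-homotopic in $M\setminus G$. The null-homotopy should exist because $G$ is a boundary geometric dual: $G$ lets us remove intersections of the homotopy with $i(\Sigma)$ by tubing into parallel copies of $G$. One must also confirm that $\alpha$ is homotopically trivial despite winding around $g$. The plan is to choose $\alpha$ so that its $\pi_1$ class is the commutator-type loop $g\cdot g^{-1}$, which is trivial.

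\textbf{Step 3: compress and compare.} Compressing along this disk should produce exactly $i(\Sigma)$, up to isotopy. The compression cuts the tube to $m$ and caps it by parallel copies of the disk, which unwinds the self-referential tube. Proposition \ref{prop2.9} then gives $i_g^{\bullet 1}\simeq (i_g^{\bullet1})_D^{\bullet 1}\simeq i^{\bullet 1}$ relative to the boundary, with Remark \ref{rem:mkpt2} justifying the last step.

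\textbf{Expected main obstacle.} Step 2 is the hard part: identifying $\alpha$ and verifying that the compressed surface is isotopic to $i$ rather than to some other $i_{h}$. If the naive curve does not work, I would fall back on applying the light bulb lemma (Lemma \ref{lem:lbl}) directly. In that approach, the foot of the stabilization handle acts as the swept arc $\alpha_0$, and the disk $E$ meets the tube surface once. Swinging the self-referential tube across the handle then changes $g$ to the trivial element. This is allowed because condition (4) holds: the complement stays connected thanks to the added handle.
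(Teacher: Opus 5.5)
Your Step 1 is correct and matches the paper's first move. After sliding one foot of the stabilizing handle along the self-referential tube, $i_g^{\bullet 1}$ becomes $i(\Sigma)$ joined to $m$ by two parallel tubes, and both tubes pass through the $3$-ball bounded by $m$.

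The gap is in Steps 2--3. Lemma \ref{lem:gabai} only produces \emph{some} embedded compressing disk with boundary $\alpha$. The isotopy class of the compressed surface depends on the interior of that disk, not just on $\partial D$, so nothing in your argument forces $(i_g^{\bullet 1})_D$ to be $i$ rather than $i_g$ or another $i_h$. Your assertion that the compression unwinds the self-referential tube is precisely the content of the theorem.

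In fact the evident compressing disks all fail. Try the cross-sectional disk of either tube, or the band between the two parallel tubes (the obvious disk bounded by your $\alpha$). Compressing along any of these returns, up to retracting a trivial finger, $i(\Sigma)$ tubed to $m$ along $\gamma$ by a single tube that $m$ still encircles, i.e.\ $i_g$ again. Also, condition (3) of Lemma \ref{lem:gabai} is a statement about $\pi_1(M\setminus G)$; tubing into copies of $G$ plays no role in verifying it.

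The missing ingredient is a genuine crossing change between a tube and the sphere $m$, which is what Lemma \ref{lem:lbl} supplies. Your fallback gestures at this but assigns the roles incorrectly.

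\begin{itemize}
\item The arc to be swept is the core $\alpha_0$ of the self-referential tube where it passes through $m$. It is replaced by an arc $\alpha_1$ going around $m$, so that $E$ meets the surface once, at a point $q\in m$. It is not the foot of the handle that is swept, and $E$ does not meet the tube surface.
\item The handle's only job is to secure condition (4). After deleting the swept piece $B$ of the first tube, $m$ (hence $q$) is still connected to $p$ through the parallel tube.
\item You must then deal with the parallel tube, which also passes through $m$. The paper applies Lemma \ref{lem:lbl} a second time with the roles of the two tubes exchanged. The two tubes then form a trivial $1$-handle that retracts to a local one.
\end{itemize}

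Alternatively, once the first tube has been pushed off $m$, your compression idea becomes valid. Compressing along the cross-sectional disk of the remaining tube now gives a surface isotopic to $i$, and Proposition \ref{prop2.9} yields $i_g^{\bullet 1}\simeq i^{\bullet 1}$.
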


\begin{proof}
  We prove the theorem by constructing an isotopy from $i_g^{\bullet 1}$ to $i^{\bullet 1}$.
  According to Remark \ref{rem:mkpt}, we may assume that the points, where we take the internal stabilization, lies in a $2$-disk neighbourhood $D_0$ centered at the base of the self-referential tube in $i(\Sigma)$. 

  \begin{figure}[htbp]
    \centering
    \begin{subfigure}[b]{0.45\textwidth}
        \centering
        \resizebox{\linewidth}{!}{
        \begin{tikzpicture}[thick]

          \draw (0,0) ellipse (1.2 and 2.5);
          \node[above] at (-0.2, 1.5) {$D_0$};
        
          \draw[white, line width=12pt] (1.0, -0.5) -- (1.5, -0.5);

          \fill[red] (0, 1.1) circle (2pt);
          \fill[red] (0, 0.5) circle (2pt);
          \draw[red] (0, 1.1) arc (90:-90:0.25 and 0.3);

          \fill[blue] (0, -0.5) circle (2pt);
          \draw[blue] (0,-0.5) -- (4,-0.5);
          \draw[blue] (4,0) 
            .. controls (4.8, 3.5) and (7.5, 2.5) .. (7.5, 1) 
            .. controls (7.5, -1) and (5, -0.5)    .. (4,-0.5);

          \draw[white, line width=8pt] (3.5, -1) -- (3.5, 0);
          \draw (4.5, -0.38) arc (15:345:0.5);
          
          \node at (4,-1.3) {$m$};
          \node at (2, -1) {$i^{\bullet 1}_g$};
        \end{tikzpicture}
        }
        \caption{}
        \label{fig:sub1}
    \end{subfigure}
    \hfill
    \begin{subfigure}[b]{0.45\textwidth}
        \centering
        \resizebox{\linewidth}{!}{
        \begin{tikzpicture}[thick]

          \draw (0,0) ellipse (1.2 and 2.5);
          \node[above] at (-0.2, 1.5) {$D_0$};
        
          \draw[white, line width=15pt] (1.0, -0.4) -- (1.5, -0.4);

          \fill[red] (0, -0.3) circle (2pt);
          \draw[red] (0,-0.3) -- (4,-0.3);
          \draw[red] (4.2,-0.1) 
            .. controls (4.8, 3.2) and (7.2, 2.2) .. (7.2, 1) 
            .. controls (7.2, -1) and (5, -0.3)    .. (4,-0.3);

          \fill[blue] (0, -0.5) circle (2pt);
          \draw[blue] (0,-0.5) -- (4,-0.5);
          \draw[blue] (4,0) 
            .. controls (4.8, 3.5) and (7.5, 2.5) .. (7.5, 1) 
            .. controls (7.5, -1) and (5, -0.5)    .. (4,-0.5);

          \draw[white, line width=10pt] (3.5, -1) -- (3.5, 0);
          \draw (4.3, -0.17) arc (45:345:0.5);

          \fill[red] (4.2, -0.1) circle (2pt);
          \fill[blue] (4, 0) circle (2pt);
        
          \node at (2, -1) {$i'$};
        \end{tikzpicture}
        }
        \caption{}
        \label{fig:sub2}
    \end{subfigure}
    \hfill
    \begin{subfigure}[b]{0.45\textwidth}
        \centering
        \resizebox{\linewidth}{!}{
        \begin{tikzpicture}[thick]

          \draw (0,0) ellipse (1.2 and 2.5);
          \node[above] at (-0.2, 1.5) {$D_0$};
        
          \draw[white, line width=15pt] (1.0, -0.4) -- (1.5, -0.4);

          \fill[red] (0, -0.3) circle (2pt);
          \draw[red] (0,-0.3) -- (4,-0.3);
          \draw[red] (4.2,-0.1) 
            .. controls (4.8, 3.2) and (7.2, 2.2) .. (7.2, 1) 
            .. controls (7.2, -1) and (5, -0.3)    .. (4,-0.3);

          \fill[blue] (0, -0.5) circle (2pt);
          \draw[blue, dashed] (3,-0.5) -- (5,-0.6);
          \draw[blue] (0,-0.5) -- (3,-0.5);
          \draw[blue] (4,0) 
            .. controls (4.8, 3.5) and (7.5, 2.5) .. (7.5, 1) 
            .. controls (7.5, -1) and (5, -0.6)    .. (5,-0.6);
          \draw[blue] (3,-0.5)
            .. controls (3.5, -2) and (4.5, -2) .. (5,-0.6);

          \draw[white, line width=10pt] (3.5, -1) -- (3.5, 0);
          \draw (4.3, -0.17) arc (45:345:0.5);

          \fill[red] (4.2, -0.1) circle (2pt);
          \fill[blue] (4, 0) circle (2pt);
        
          \node at (2, -1) {$i''$};
          \node[blue] at (4, -2) {$\alpha_1$};
          \node[blue] at (4.6, -0.9) {$\alpha_0$};
        \end{tikzpicture}
        }
        \caption{}
        \label{fig:sub3}
    \end{subfigure}
    \hfill
    \begin{subfigure}[b]{0.45\textwidth}
        \centering
        \resizebox{\linewidth}{!}{
        \begin{tikzpicture}[thick]

          \draw (0,0) ellipse (1.2 and 2.5);
          \node[above] at (-0.2, 1.5) {$D_0$};
        
          \draw[white, line width=15pt] (1.0, -0.4) -- (1.5, -0.4);

          \fill[red] (0, -0.3) circle (2pt);
          \draw[red] (0,-0.3) -- (3.1,-0.3);
          \draw[red] (4.2,-0.1) 
            .. controls (4.8, 3.2) and (7.2, 2.2) .. (7.2, 1) 
            .. controls (7.2, -1) and (5, -0.3)    .. (4.9,-0.3);
          \draw[red] (3.1,-0.3)
            .. controls (3.5, -1.7) and (4.5, -1.7) .. (4.9,-0.3);

          \fill[blue] (0, -0.5) circle (2pt);
          \draw[blue] (0,-0.5) -- (3,-0.5);
          \draw[blue] (4,0) 
            .. controls (4.8, 3.5) and (7.5, 2.5) .. (7.5, 1) 
            .. controls (7.5, -1) and (5, -0.6)    .. (5,-0.6);
          \draw[blue] (3,-0.5)
            .. controls (3.5, -2) and (4.5, -2) .. (5,-0.6);
          
          \draw (4.4, -0.5) arc (0:360:0.5);

          \fill[red] (4.2, -0.1) circle (2pt);
          \fill[blue] (4, 0) circle (2pt);
        
          \node at (2, -1) {$i'''$};
        \end{tikzpicture}
        }
        \caption{}
        \label{fig:sub4}
    \end{subfigure}
    \caption{}
    \label{fig:main}
  \end{figure}

  As shown in Figure \ref{fig:sub1}, the embedding $i_g$ is obtained by attaching a tube (represented by the blue curve) 
  from $D_0$ to the meridian of the tube at a point (represented by the black circle).
  By taking an internal stabilization, we attach a local red tube to the embedded surface, resulting in the embedding $i_g^{\bullet 1}$.

  We begin by applying an isotopy that pushes one of the bases of the red local tube along the entire blue self-referential tube.
  As the base travels, it traces out a parallel red tube alongside the blue one.
  The resulting embedding $i'$ is shown in Figure \ref{fig:sub2}.

  We can apply Lemma \ref{lem:lbl} to isotope the blue tube out of the meridian sphere $m$.
  We check the conditions of the light bulb lemma here.
  \begin{itemize}
    \item[(1)] The boundary geometric dual of the original embedding $i$ is also a boundary geometric dual of $i'$. 
    \item[(2)] In Figure \ref{fig:sub3}, for convenience, we abuse the notation $\alpha_0$ and $\alpha_1$ to denote both the core of the tubes and the tubes themselves.
    The disk bounded by the curves $\alpha_0$ and $\alpha_1$ intersects the sphere $m$ transversely at one point $q$.
    The image of $i'$ excluding the tube $\alpha_0$ is connected, because of the existence of the red tube.    
  \end{itemize}
  The resulting embedding $i''$ is shown in Figure \ref{fig:sub3}. 

  By applying the light bulb lemma a second time, 
  we can isotope the red tube out of the sphere $m$.
  The resulting embedding $i'''$ is shown in Figure \ref{fig:sub4}.

  Once the self-referential obstruction is resolved, the embedding $i'''$ is simply obtained from tubing $i$ to itself along the blue line and the red line. 
  A final ambient isotopy allows us to retract this additional trivial $1$-handle entirely to a local one.
  Therefore, $i_g^{\bullet 1}$ and $i^{\bullet 1}$ are isotopic relative to the boundary.
\end{proof}

Since the isotopy from Theorem \ref{thm1} may be taken to be supported in a small neighbourhood of the self‑referential tube,
we immediately obtain a corollary stating that an internal stabilization can resolve multiple self‑referential tubes.

\begin{cor}\label{cor1}
  Suppose $i: \Sigma \hookrightarrow M$ is a neat embedding with a boundary geometric dual.
  Let $\omega$ be an element of $\ZZ[\pi_1(M)]$, and $n$ be a positive integer.
  Then the two embeddings $i_\omega^{\bullet n}$ and $i^{\bullet n}$ are isotopic relative to the boundary.
\end{cor}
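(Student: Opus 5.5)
Write $\omega = \sum_{k=1}^{r} \epsilon_k g_k$ with $g_k \in \pi_1(M)$ and $\epsilon_k = \pm 1$. Here $i_\omega$ is the embedding obtained from $i$ by adding $r$ self-referential tubes along the $g_k$, with orientations chosen according to the signs $\epsilon_k$. I will construct these tubes to be pairwise disjoint. Their bases lie in pairwise disjoint small disks $D_0^{(1)}, \dots, D_0^{(r)} \subset i(\Sigma)$, away from the dual $G$. The guiding paths $\gamma_k$ and the meridian spheres $m_k$ are disjoint, which is possible by general position in dimension $4$. I will induct on $r$ and show that $i_\omega^{\bullet 1}$ is isotopic to $i^{\bullet 1}$ relative to the boundary. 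The statement for general $n \ge 1$ then follows. Indeed, $i_\omega^{\bullet n} = (i_\omega^{\bullet 1})^{\bullet (n-1)}$ up to isotopy by Remark \ref{rem:mkpt}, and Remark \ref{rem:mkpt2} lets us add the remaining $n-1$ stabilizations on both sides.

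For the inductive step, set $\omega' = \omega - \epsilon_r g_r$. Then $i_\omega = (i_{\omega'})_{\pm g_r}$, meaning $i_\omega$ is obtained from the embedding $i_{\omega'}$ by adding one self-referential tube along $g_r$. The embedding $i_{\omega'}$ is a neat embedding of $\Sigma$ with the same boundary geometric dual $G \subset \partial M$. This holds because all the tubes live in the interior and miss $G$.

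By Remark \ref{rem:mkpt}, I place the single stabilization point inside $D_0^{(r)}$. Then I apply Theorem \ref{thm1} to $i_{\omega'}$ and the element $g_r$. The sign $\epsilon_r$ only changes the orientation of the tube. This does not matter for the proof of Theorem \ref{thm1}, which never uses the orientation of the meridian sphere. The result is that $i_\omega^{\bullet 1}$ is isotopic to $i_{\omega'}^{\bullet 1}$ relative to the boundary. Induction then gives $i_{\omega'}^{\bullet 1} \simeq i^{\bullet 1}$.

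The main obstacle is checking that the isotopy of Theorem \ref{thm1} is legitimate in the presence of the other tubes. In that proof, each application of the light bulb lemma (Lemma \ref{lem:lbl}) requires three things. First, the piece $B$ being moved must lie in a product neighbourhood of the arc $\alpha_0$. Second, the points $p$ and $q$ must lie in the same component of the surface minus $B$. Third, the swept disk $E$ must meet the surface only at $q$ on $m_r$.

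I will choose the disk $E$ and the neighbourhood $N$ inside a small neighbourhood of $\gamma_r \cup m_r$. This neighbourhood is disjoint from the other tubes $\gamma_k$, $m_k$ for $k<r$, so the other tubes never meet $E$. The connectivity condition still holds. Removing the blue tube (respectively the red tube) leaves the surface connected, thanks to the parallel red (respectively blue) tube, and the earlier tubes only add handles that do not disconnect anything.

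The final step retracts the trivial handle to a local one. This retraction also happens inside the neighbourhood of $\gamma_r$ and $D_0^{(r)}$, so it is compatible with the other tubes. This locality is exactly the remark preceding the corollary, and it is the one point I would verify carefully.
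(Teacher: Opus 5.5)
Your proposal is correct and is essentially the paper's argument: the paper observes that the isotopy of Theorem \ref{thm1} is supported near the self-referential tube and iterates it, which is exactly your induction on the number of tubes, with the reduction from $n$ stabilizations to one made explicit via Remarks \ref{rem:mkpt} and \ref{rem:mkpt2}. Your locality check is not strictly needed, since $i_{\omega'}$ is itself a neat embedding of $\Sigma$ with the same boundary geometric dual, so Theorem \ref{thm1} applies to it verbatim.
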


\begin{rem}\label{rem:omega}
  The embedding $i_\omega$ is obtained by adding self-referential tubes on $i$.
  For detailed construction we refer the readers to \cite[Definition 1.7]{S21}.
\end{rem}

\section{Isotopy Classification of High-genus Surfaces in $S^2$-bundles over $\Sigma$}\label{sec:main}

Let $\Sigma$ be the closed surface with genus $g \geq 1$.
Up to bundle isomorphism, there are two orientable $S^2$-bundles over $\Sigma$, namely, the trivial one $\Sigma \times S^2$ and the nontrivial one $\Sigma \ltimes S^2$.
These two bundles can be distinguished by the second Stiefel-Whitney class.
Moreover, as smooth $4$-manifolds, $\Sigma \times S^2$ and $\Sigma \ltimes S^2$ are not diffeomorphic,
since they have non-isomorphic intersection forms.
We refer the readers to \cite[Subsection 2.3]{Y26} for more details.
Let $X$ be $\Sigma \times S^2$ or $\Sigma \ltimes S^2$.
Pick a basepoint $b \in \Sigma$.
Denote the $S^2$-fiber at $b$ as $G$.

In this section, we will apply Theorem \ref{thm1} to classify the embedded surfaces of genus $g' > g$ in $X$ with geometric dual $G$.
Let $\Sigma'$ be the closed surface with genus $g'$.

\subsection{Homotopy classification}

Let $\langle \Sigma',M \rangle$ be the set containing the pointed homotopy classes from $\Sigma'$ to $M$.
Here $M$ is an arbitrary $4$-manifold. 
The following proposition gives a specific description of $\langle \Sigma',M \rangle$.

\begin{prop}\label{prop:hmtp}
  There is a bijection $$ \phi: \langle \Sigma',M \rangle \rightarrow {\rm Hom}(\pi_1(\Sigma'),\pi_1(M)) \times \pi_2(M).$$
\end{prop}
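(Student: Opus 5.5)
The plan is to build $\Sigma'$ as a CW complex and use obstruction theory. Write $\Sigma'$ as the wedge $W=\bigvee_{k=1}^{2g'} S^1$ with a single $2$-cell attached along the product of commutators $\prod_{k}[a_k,b_k]$. For a pointed map $f:\Sigma'\to M$, let the first component of $\phi$ be the induced homomorphism $f_*:\pi_1(\Sigma')\to\pi_1(M)$. The second component will be a difference class in $\pi_2(M)$, measured against a fixed reference map. Since $\pi_2(M)$ is not canonically attached to a homotopy class, I will fix, for each homomorphism $\rho$, a reference map $f_\rho$ inducing $\rho$, and define the second coordinate relative to $f_\rho$.

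\emph{Realization of $\rho$.} Given $\rho\in{\rm Hom}(\pi_1(\Sigma'),\pi_1(M))$, send each circle of $W$ to a loop representing $\rho(a_k)$ or $\rho(b_k)$. The attaching word maps to $\rho(\prod[a_k,b_k])=1$, so the map extends over the $2$-cell. This gives $f_\rho$ and shows the first coordinate is surjective.

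\emph{Difference class.} Fix $\rho$ and let $f$ be any pointed map with $f_*=\rho$. On $W$, the loops $f(a_k)$ and $f_\rho(a_k)$ are pointed homotopic, and likewise for the $b_k$, so after a pointed homotopy $f$ agrees with $f_\rho$ on $W$. Gluing $f$ and $f_\rho$ along the two copies of the $2$-cell produces a map $S^2\to M$, and hence a class $d(f,f_\rho)\in\pi_2(M)$. Set $\phi(f)=(\rho,d(f,f_\rho))$. Conversely, pinching the $2$-cell and inserting any class $\xi\in\pi_2(M)$ realizes every difference, so $\phi$ is surjective.

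\emph{Main obstacle: well-definedness and injectivity.} The class $d$ depends on the chosen homotopy of $f|_W$ onto $f_\rho|_W$. Two choices differ by a self-homotopy of $f_\rho|_W$, which is a loop in the pointed mapping space ${\rm Map}_*(W,M)\simeq\prod\Omega M$. Such a loop is a family of maps $\coprod_k S^1\times S^1\to M$, constant on the basepoint circles, and therefore corresponds to a tuple of elements of $\pi_2(M)$. The resulting change in $d$ is $\sum_k\big((1-\rho(b_k))\cdot x_k + (\rho(a_k)-1)\cdot y_k\big)$, with the $x_k,y_k\in\pi_2(M)$ arbitrary, so in general $d$ is only defined modulo the submodule $I_\rho\,\pi_2(M)$ generated by these terms. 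Standard obstruction theory then gives a bijection onto $H^2(\Sigma';\pi_2(M)_\rho)=\pi_2(M)_{\pi_1(\Sigma')}$, the coinvariants, fibered over $\rho$.

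So the plan breaks down into three cases:
\begin{itemize}
\item If $\pi_1(M)$ acts trivially on $\pi_2(M)$ through $\rho$, the indeterminacy vanishes and the bijection holds exactly as stated.
\item If $\rho$ is trivial, the same conclusion holds.
\item For the manifolds used later, namely complements of surfaces in $X$, I would check directly that $\pi_2(M)$ is a trivial $\pi_1$-module. If it is not, I would restate the target as the coinvariants.
\end{itemize}
The step I expect to require the most care is exactly this action computation. I would first try to reduce to it by verifying that the twisting terms $(g-1)\xi$ vanish for the relevant $M$.
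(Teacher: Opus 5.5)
Your construction is the same as the paper's. The first coordinate is the induced homomorphism, and each $\rho$ gets a fixed reference map $f_\rho$. You homotope $f$ to agree with $f_\rho$ on the $1$-skeleton and read the second coordinate off the sphere obtained by gluing the two $2$-cells. The difference is that you check how this class depends on the homotopy chosen on the $1$-skeleton. The paper's sketch asserts well-definedness for arbitrary $M$, citing an argument from \cite{Y26} that it says was written for special $4$-manifolds.

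Your analysis is correct, and it shows the sketch has a real gap at the stated generality. A self-homotopy of $f_\rho|_W$ is an element of $C^1(\Sigma';\pi_2(M)_\rho)$, and it changes the class by its coboundary. So the class is only defined in $\pi_2(M)/I_\rho\pi_2(M)\cong H^2(\Sigma';\pi_2(M)_\rho)$. Your explicit formula drops the unit factors produced by the Fox derivatives of $\prod_k[a_k,b_k]$. This is harmless, because the subgroup generated is still $I_\rho\pi_2(M)$.

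A concrete example: take $M=(S^1\times D^3)\natural(S^2\times D^2)$, so $\pi_2(M)\cong\mathbb{Z}[t^{\pm1}]$. Let $\rho(a_1)=t$ and let $\rho$ be trivial on the other generators. Dragging a bubble $\xi$ once around a push-off of $a_1$ is a pointed homotopy from $f_\rho\#\xi$ to $f_\rho\#t^{\pm1}\xi$. Both maps agree with $f_\rho$ on the $1$-skeleton, yet their naive difference classes $\xi$ and $t^{\pm1}\xi$ differ. So the paper's $\phi$ is well defined only when $\rho(\pi_1\Sigma')$ acts trivially on $\pi_2(M)$. In general the fibre over $\rho$ is a torsor for the coinvariants, exactly as you say.

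The one thing to fix is your last bullet. The proposition is not applied to any complement in this paper; on $X''$ the $\pi_1$-action on $\pi_2$ is in fact nontrivial. It is applied to $M=X$ itself: in Corollaries \ref{cor:hmpt1} and \ref{cor:hmtp2}, in the proof of Theorem \ref{thm:main1}, and in the final corollary.

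For $X$ your first case applies. Here $\pi_2(X)\cong\pi_2(S^2)\cong\mathbb{Z}$, generated by a fibre. Since $X\to\Sigma$ is an oriented bundle over an oriented base, the deck transformations of $\tilde X\cong\mathbb{R}^2\times S^2$ preserve the fibre orientation, so $\pi_1(X)$ acts trivially. With that check, your argument proves exactly the version of the proposition the rest of the paper uses.
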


The proof of Proposition \ref{prop:hmtp} is essentially given in \cite[Proposition 3.5]{Y26}.
However, the proof of \cite[Proposition 3.5]{Y26} is stated only for special surfaces and $4$-manifolds. 
For the sake of rigor, we sketch the proof of Proposition \ref{prop:hmtp} in the general case.

\begin{proof}[Sketch of the proof.]
  There is a map $\phi_1: \langle \Sigma',M \rangle \rightarrow {\rm Hom}(\pi_1(\Sigma'),\pi_1(M))$,
  sending a pointed map $f:\Sigma' \rightarrow M$ to the induced homomorphism on the fundamental groups. 
  Note that for any $\varphi \in {\rm Hom}(\pi_1(\Sigma'),\pi_1(M))$, we fix a pointed map $f_\varphi : \Sigma' \rightarrow M$ such that $\phi_1([f_\varphi]) = \varphi$.
  
  Given pointed maps $g_0,g_1:\Sigma' \rightarrow M$ satisfying $\phi_1([g_0]) = \phi_1([g_1])$.
  After an original homotopy, we may assume that $g_0$ and $g_1$ are identical on the $1$-skeleton of $\Sigma'$.
  Then the characteristic map $j: D^2 = e^2 \rightarrow \Sigma'$ of the unique $2$-cell of $\Sigma'$
  satisfies $g_0' \circ j |_{\partial D^2} = g_1 \circ j|_{\partial D^2}$, and induces a map $\psi: S^2 \rightarrow M$.
  We denote that $[\psi] = \phi_2([g_0],[g_1])$.

  We can define $\phi([f]) = (\phi_1([f]), \phi_2([f],[f_{\phi_1([f])}]))$.
  One can show that $\phi$ is well-defined, injective and surjective via the same argument as the proof of \cite[Proposition 3.5]{Y26}.
\end{proof}

Here are direct corollaries of Proposition \ref{prop:hmtp}.

\begin{cor}\label{cor:hmpt1}
  For $X = \Sigma \times S^2$, the pointed homotopy class of a smooth map $f: \Sigma' \rightarrow X$ is determined by 
  the induced homomorphism $({\rm pr}_1 \circ f)_* : \pi_1(\Sigma') \rightarrow \pi_1(\Sigma)$
  and the degree of the map ${\rm pr}_2 \circ f: \Sigma' \rightarrow S^2$. 
\end{cor}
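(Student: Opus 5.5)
By Proposition \ref{prop:hmtp}, the pointed homotopy class of $f$ is determined by the pair $(f_*, [\psi])$, where $f_*\in {\rm Hom}(\pi_1(\Sigma'),\pi_1(X))$ and $[\psi]\in \pi_2(X)$. The plan is to identify both factors for $X=\Sigma\times S^2$ in terms of the projections. Since $\pi_1(S^2)=0$ and $\pi_2(\Sigma)=0$ for $g\ge 1$ (as $\Sigma$ is aspherical), the product decomposition gives
\[
\pi_1(X)\cong \pi_1(\Sigma),\qquad \pi_2(X)\cong \pi_2(S^2)\cong \ZZ,
\]
with the isomorphisms induced by ${\rm pr}_1$ and ${\rm pr}_2$ respectively. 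Under the first, $f_*$ corresponds exactly to $({\rm pr}_1\circ f)_*$. So the first coordinate of $\phi([f])$ is determined by $({\rm pr}_1\circ f)_*$.

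For the second coordinate I would argue as follows. Suppose $f_0,f_1$ are pointed maps with $({\rm pr}_1\circ f_0)_*=({\rm pr}_1\circ f_1)_*$ and $\deg({\rm pr}_2\circ f_0)=\deg({\rm pr}_2\circ f_1)$. Following the proof sketch of Proposition \ref{prop:hmtp}, first homotope them to agree on the $1$-skeleton of $\Sigma'$. This is possible because they induce the same map on $\pi_1$ and the $1$-skeleton is a wedge of circles. The two restrictions to the $2$-cell then glue to a map $\psi:S^2\to X$, and $[\psi]$ is the obstruction to a homotopy rel $1$-skeleton.

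Its image under ${\rm pr}_2$ has degree $\deg({\rm pr}_2\circ f_0)-\deg({\rm pr}_2\circ f_1)$. Indeed, degree is additive under this cut-and-reglue along the common $1$-skeleton: the $1$-skeleton maps to a $1$-dimensional set in $S^2$, so a generic regular value is hit only in the $2$-cell interiors. This difference is $0$, so ${\rm pr}_{2*}[\psi]=0$. Since ${\rm pr}_{2*}$ is an isomorphism on $\pi_2$, we get $[\psi]=0$ and hence $f_0\simeq f_1$.

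Conversely, homotopic maps clearly have the same two invariants. Together these give the claim.

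The main obstacle is the bookkeeping in the sign and orientation conventions of the gluing map $\psi$, so that the degree difference is computed correctly (one $2$-cell enters with reversed orientation). A secondary subtlety is the basepoint issue: $\pi_1$ acts trivially on $\pi_2(X)$ here because $\pi_2(X)\cong\pi_2(S^2)$ comes from the simply connected factor. This ensures the obstruction class is well defined and the argument of Proposition \ref{prop:hmtp} applies without modification.
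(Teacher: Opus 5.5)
Your proposal is correct and follows essentially the paper's route. The paper records this as a direct consequence of Proposition~\ref{prop:hmtp}, using $\pi_1(X)\cong\pi_1(\Sigma)$ via ${\rm pr}_1$ and $\pi_2(X)\cong\pi_2(S^2)\cong\ZZ$ via ${\rm pr}_2$, and your difference-class computation makes explicit that the $\pi_2$-coordinate is detected by $\deg({\rm pr}_2\circ f)$. The degree additivity is cleanest as the identity $f_{0*}[\Sigma']-f_{1*}[\Sigma']=\pm\psi_*[S^2]$ in $H_2(X)$, which avoids the smoothness and regular-value issues that arise after the homotopy on the $1$-skeleton.
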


For $X = \Sigma \ltimes S^2$, we may fix a model for $\Sigma \ltimes S^2$.
Pick a homotopically nontrivial loop $\phi: S^1 \rightarrow SO(3)$ to be
\begin{equation}
  \phi(\theta) = \begin{pmatrix}
  \cos \theta & \sin \theta & 0 \\
  -\sin \theta & \cos \theta & 0 \\
  0 & 0 & 1
  \end{pmatrix},
\end{equation}
and construct the model for $\Sigma \ltimes S^2$ by defining 
\begin{equation}
  \Sigma \ltimes S^2 = (\Sigma-D)\times S^2 \cup_\phi D\times S^2.
\end{equation}
There is an embedding $\S_0$ given by 
\begin{equation}\label{def:s0}
  \S_0: \Sigma \hookrightarrow \Sigma \ltimes S^2, \quad x \mapsto (x,(0,0,1)^T).
\end{equation}

\begin{cor}\label{cor:hmtp2}
  For $X = \Sigma \ltimes S^2$ and a smooth map $f: \Sigma' \rightarrow X$, its pointed homotopy class is determined by
  the induced homomorphism $({\rm pr}_1 \circ f)_* : \pi_1(\Sigma') \rightarrow \pi_1(\Sigma)$
  and the algebraic intersection number of $f$ and $\S_0$.
\end{cor}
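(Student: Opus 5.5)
The plan is to apply Proposition \ref{prop:hmtp} with $M = \Sigma \ltimes S^2$ and identify the two coordinates of $\phi$ with the two invariants named in the statement. The first coordinate $\phi_1([f]) = f_* : \pi_1(\Sigma') \to \pi_1(X)$ is handled by the bundle projection. Since the fiber $S^2$ is simply connected, the long exact sequence of the bundle shows that ${\rm pr}_1 : X \to \Sigma$ induces an isomorphism on $\pi_1$. So $f_*$ and $({\rm pr}_1\circ f)_*$ determine each other.

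For the second coordinate, $\pi_2(\Sigma)=0$ because $g\ge 1$, so the same exact sequence gives $\pi_2(X)\cong \pi_2(S^2)\cong \ZZ$, generated by the fiber class $[G]$. The step needing care is to detect an element of $\pi_2(X)$ by intersection with $\S_0$. The fiber $G$ meets the section $\S_0(\Sigma)$ transversely in exactly one point, so $[G]\cdot[\S_0]=\pm 1$. Hence the homomorphism
$$\pi_2(X)\to \ZZ, \qquad [\psi]\mapsto \psi\cdot \S_0,$$
sending a sphere to its algebraic intersection with $\S_0$, is an isomorphism. Here I use that $\pi_2(X)\to H_2(X)$ followed by intersection with $[\S_0]$ is well defined.

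To conclude, take pointed maps $f, f'$ with $({\rm pr}_1\circ f)_* = ({\rm pr}_1\circ f')_*$ and $f\cdot \S_0 = f'\cdot\S_0$. By the first step, $\phi_1([f])=\phi_1([f'])$. Following the construction of $\phi_2$ in the proof of Proposition \ref{prop:hmtp}, homotope $f$ and $f'$ to agree on the $1$-skeleton. The difference sphere $\psi$ is then formed from the two $2$-cell maps, so as integral $2$-cycles $[\psi] = f_*[\Sigma'] - f'_*[\Sigma']$, up to sign convention. Pairing with $[\S_0]$ gives $\psi\cdot\S_0 = f\cdot\S_0 - f'\cdot\S_0 = 0$, so $[\psi]=0$ in $\pi_2(X)$. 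Therefore $\phi_2([f],[f'])$ is trivial, and by injectivity of $\phi$ (equivalently, the additivity of $\phi_2$ used in the cited argument) $[f]=[f']$.

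The main obstacle is this last compatibility: checking that $\phi_2$ is additive in the sense $\phi_2([f],[f_\varphi]) - \phi_2([f'],[f_\varphi]) = \phi_2([f],[f'])$, and that the difference cycle really equals $f_*[\Sigma']-f'_*[\Sigma']$ in $H_2$. Both follow from the cellular description with a single $2$-cell, and this is the same argument referenced from \cite[Proposition 3.5]{Y26}.
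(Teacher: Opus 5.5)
Your argument is correct and follows the paper's route: the paper records this as a direct corollary of Proposition \ref{prop:hmtp} without further proof. Your two identifications supply exactly the implicit step, namely $\pi_1(X)\cong\pi_1(\Sigma)$ via the projection, and detecting the difference sphere through $[\psi]=f_*[\Sigma']-f'_*[\Sigma']$ together with $[G]\cdot[\S_0]=\pm1$; the additivity of $\phi_2$ you flag is not needed, since once $[\psi]=0$ the two $2$-cell maps are homotopic rel boundary, which gives $f\simeq f'$ directly.
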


\subsection{Isotopy classification of embedded $\Sigma$ in $X$}

We first review the construction of a family standard embeddings of $\Sigma$ to $X$ in \cite{LWXZ25} and \cite{Y26}.

\begin{defn}[\cite{LWXZ25}]
  Here let $X = \Sigma \times S^2$. 
  We define the embedding $\S_0:\Sigma \hookrightarrow X$ by $\S_0(x) = (x,{\rm pt})$ for a point ${\rm pt} \in S^2$.
  Given a positive integer $d$, choose $d$ distinct points $p_1,\dots,p_d$ in $\Sigma$, 
  none of which equals $b = \operatorname{pr}(G)$.
  Let $\S_d: \Sigma \rightarrow \Sigma \ltimes S^2$ to be the embedding obtained by 
  resolving the intersections of $\S_0$ and the $S^2$-fibers of $\Sigma \ltimes S^2$ over $p_1,\cdots,p_d$. 
  Given a negative integer $d$, we define $\S_d$ in the same way, 
  but resolve the intersections of $\S_0$ and the $S^2$ fibers over $p_1,\cdots,p_{-d}$ with the opposite orientation.
\end{defn}

\begin{defn}[\cite{Y26}]
  Here let $X = \Sigma \ltimes S^2$. 
  Recall that the embedding $\S_0:\Sigma \hookrightarrow X$ is defined in (\ref{def:s0}).
  Given an integer $d$, we choose $|d|$ distinct points $p_1,\dots, p_{|d|}$ in $\Sigma \setminus \{b\}$.
  After resolving the intersections of $\S_0$ with fibers (may with the opposite orientation, up to the sign of $d$) over these points, 
  we obtain the embedding $\S_d$.
\end{defn}

Then we can review the following results according to \cite[Theorem 6.1]{LWXZ25} and \cite[Theorem 4.1]{Y26}.

\begin{prop}[\cite{LWXZ25, Y26}]\label{prop:2526}
  Let $i:\Sigma \hookrightarrow X$ be an embedding satisfying the following conditions.
  \begin{itemize}
    \item[(1)] The $2$-sphere $G$ is a geometric dual of $i$.
    \item[(2)] The embedding $i$ agrees with $\S_0$ in a neighbourhood of $b$ in $\Sigma$.
    \item[(3)] The embedding $i$ is homotopic to $\S_d$ relative to $b$ for an integer $d$.  
  \end{itemize}
  Then after an ambient isotopy, we obtain an embedding $j$ satisfying the following.
  \begin{itemize}
    \item[(a)] The resulting embedding $j$ agree with $\S_d$ in a neighbourhood of ${\rm sk}^1$. 
    Here ${\rm sk}^1$ is the $1$-skeleton of the standard cellular complex structure of $\Sigma$ with the only $0$-cell $b$.
    \item[(b)] We denote the resulting $4$-manifold after excluding a tubular neighbourhood of $\S_d({\rm sk}^1)$ by $X''$. 
    By restricting $j$ and $\S_d$ to the pre-image of $X''$, 
    we obtain two neat embeddings $j'$ and $\S_d'$ of disk,
    which are identical on the boundary.
    There is an element $\omega$ in $\ZZ[\pi_1(X'')] \cong \ZZ[\pi_1(X)]$, such that $j' = (\S_d')_\omega$.
    The notation $(\S_d')_\omega$ is mentioned in Remark \ref{rem:omega}.
  \end{itemize}
\end{prop}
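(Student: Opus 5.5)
This result is quoted from \cite[Theorem 6.1]{LWXZ25} and \cite[Theorem 4.1]{Y26}, so the plan is to reproduce the structure of those arguments. The idea is to normalize $i$ first near the $1$-skeleton and then on the complementary $2$-cell, using the dual $G$ at each stage.

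\textbf{Step 1: make $i$ agree with $\S_d$ near $b$.} By hypothesis (2), $i$ already agrees with $\S_0$ near $b$. The maps $\S_0$ and $\S_d$ coincide near $b$, since the resolutions for $\S_d$ occur over the points $p_k\neq b$. So $i$ agrees with $\S_d$ in a neighbourhood of $b$.

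\textbf{Step 2: straighten the $1$-skeleton.} Take the $2g$ loops of ${\rm sk}^1$ based at $b$. Because $i\simeq \S_d$ rel $b$, each loop $i(a_k)$ is homotopic to $\S_d(a_k)$ rel $b$ in $X$. In dimension four, a homotopy of arcs and loops can be replaced by an isotopy after finitely many finger moves, that is, crossing changes with the surface. Each unwanted intersection of an arc with $i(\Sigma)$ is removed by tubing into a parallel copy of the dual $G$. Equivalently, one applies the light bulb lemma (Lemma \ref{lem:lbl}) to push the arc across, at the cost of changing the surface only by an ambient isotopy. The arcs may also need regular-homotopy corrections near $b$; these are absorbed because $G$ supplies the needed framing and twisting.

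\textbf{Step 3: match the framings.} Next, make the normal data of $i$ along ${\rm sk}^1$ agree with that of $\S_d$. The relevant data are the normal directions of the surface band along each loop. These differ by elements of $\pi_1(SO(2))$, together with twisting in $X$. The discrepancies are killed by spinning the band around $G$ or around a meridian. This step, together with Step 2, yields conclusion (a).

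\textbf{Step 4: compare the two disks.} Remove a neighbourhood of $\S_d({\rm sk}^1)$ to obtain $X''$. The inclusion induces $\pi_1(X'')\cong\pi_1(X)$, because ${\rm sk}^1$ has codimension at least $2$ and $\S_d$ has the dual $G$, so meridians die. Restricting $j$ and $\S_d$ gives neat disks $j'$ and $\S_d'$ with equal boundaries and a common boundary geometric dual, namely the remainder of $G$ near $b$ in $\partial X''$. These disks are homotopic rel boundary. The $\pi_2$ discrepancy is zero because $j\simeq \S_d$; any leftover sphere class can be absorbed in Step 3 by tubing with $G$.

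\textbf{Step 5: apply Kosanović--Teichner.} By \cite[Proposition 1.7]{KT24}, two homotopic disks with a common boundary dual differ by self-referential tubes. Hence $j'=(\S_d')_\omega$ for some $\omega\in\ZZ[\pi_1(X'')]$, which is conclusion (b).

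\textbf{Expected main obstacle.} The hardest part should be Step 2 together with the identification $\pi_1(X'')\cong\pi_1(X)$. Isotoping the loops rel $b$ must not disturb $G$-duality and must keep the rest of the surface controlled. Loops in $\Sigma$ intersect only at $b$, so this requires working in a neighbourhood of a wedge of circles rather than one loop at a time. For that local model I would follow the arguments of \cite{LWXZ25} and \cite{Y26}.
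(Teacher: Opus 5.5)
The paper does not reprove this statement. It imports (a) and (b) from the proofs of \cite[Theorem 6.1]{LWXZ25} and \cite[Theorem 4.1]{Y26}, and remarks that (b) also follows from (a) by \cite[Theorem 0.2]{S21}, since $\pi_1(X'')$ has no $2$-torsion. Your outline has the same shape, with \cite[Proposition 1.7]{KT24} in place of \cite{S21}, which would spare the torsion hypothesis. The trouble is Step 4, where you check the hypotheses of that result: the claimed common boundary dual does not exist, and the homotopy rel boundary is not established in the right manifold.

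\textbf{No boundary dual.} In the $X''$ of the statement there is no common boundary dual. The $0$-cell is $b$, so the removed neighbourhood $\nu=\nu(\S_d({\rm sk}^1))$ contains a $4$-ball around $\S_d(b)\in G$. Hence $G\cap X''$ is a disk disjoint from $j'$, and $G\cap\partial X''$ is a circle. In fact no sphere in $\partial X''=\partial\nu\cong\#^{2g}S^1\times S^2$ can meet $\partial j'$ in exactly one transverse point. The curve $\partial j'$ bounds the punctured surface $\S_d(\Sigma)\cap\nu$ inside the $1$-handlebody $\nu$, and $H_1(\partial\nu)\to H_1(\nu)$ is an isomorphism, so $\partial j'$ is null-homologous in $\partial X''$. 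To get a boundary dual you have two options:
\begin{itemize}
\item Delete $\nu(G)$ as well, so that a fibre $\{\theta\}\times S^2\subset\partial\nu(G)$ works. The price is that $\pi_1$ becomes free of rank $2g$, the homotopy must exist in that smaller manifold, and $\omega$ must be pushed forward afterwards.
\item Move the $0$-cell off $G$ and use a result that allows an interior dual.
\end{itemize}

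\textbf{Homotopy must hold in $X''$.} The homotopy rel boundary has to hold in $X''$, and this does not follow from $j\simeq\S_d$. Seen from $X''$, the manifold $X$ is obtained by attaching $2g$ $3$-handles along the meridian spheres $m_k$ of the $1$-cells and then a $4$-handle. So $\pi_2(X'')\to\pi_2(X)$ is onto, with kernel the $\ZZ[\pi_1]$-span of the $m_k$. For example, for $X=\Sigma\times S^2$ one has $X''\simeq\Sigma\vee S^2$ and $\pi_2(X'')\cong\ZZ[\pi_1\Sigma]$, and the map to $\pi_2(X)\cong\ZZ$ is the augmentation.

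Homotopy in $X$ only places the difference sphere $j'\cup-\S_d'$ in this kernel, and that class can be nonzero for embeddings satisfying (1)--(3) and (a). Sweep a $1$-cell once around a fibre sphere away from $G$. This is an ambient isotopy that returns $\nu({\rm sk}^1)$ to itself, yet it changes the class of the $2$-cell by $\pm\gamma(1-c)\cdot[S^2]\neq 0$. Here $\gamma\in\pi_1$ is some element and $c$ is the loop crossing that $1$-cell once. Self-referential tubes do not change the homotopy class rel boundary. So the isotopy in (a) must be chosen with compensating sweeps, making $j'\simeq\S_d'$ rel $\partial$ in $X''$. That step is missing from your argument, and ``tubing with $G$'' cannot supply it, because tubing is not an isotopy.
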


\begin{rem}
  We remark here that Proposition \ref{prop:2526} is a direct corollary of the proof of \cite[Theorem 6.1]{LWXZ25} and \cite[Theorem 4.1]{Y26}.
  We reformulate the statement of the conclusion here so as to avoid introducing the Dax invariant, which is unnecessary for the subsequent content. 
  Since the fundamental group of $X''$ contains no $2$-torsion, 
  the result $(b)$ of Proposition \ref{prop:2526} also can be deduced by \cite[Theorem 0.2]{S21} and the result $(a)$.
\end{rem}

\subsection{An initial isotopy on the $1$-skeleton of $\Sigma'$}
In this subsection, we study the homotopical properties of embeddings $\Sigma' \hookrightarrow X$ with a geometric dual.
It deduces that we may isotope the $1$-skeleton of such embedded surfaces to a standard one. 
Recall that $X$ refers to $\Sigma \times S^2$ or $\Sigma \ltimes S^2$, and $G$ refers to an $S^2$-fiber.
Denote the bundle projection map by ${\rm pr}: X \rightarrow \Sigma$.

\begin{lem}\label{lem:deg1}
  Let $f: \Sigma' \hookrightarrow X$ be an embedding with the geometric dual $G$.
  Then the composition map ${\rm pr} \circ f: \Sigma' \rightarrow \Sigma$ is of degree $1$.
\end{lem}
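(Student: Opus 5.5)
We compute the degree through intersection numbers with the fiber, using that $G$ is homologous to every fiber.

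Fix orientations so that $X$ carries the orientation of $\Sigma$ followed by that of the fiber, with $G$ oriented as a fiber. Pick a regular value $y \in \Sigma$ of ${\rm pr}\circ f$, and let $F_y = {\rm pr}^{-1}(y)$ be the fiber over $y$. Since ${\rm pr}$ is a submersion, $y$ is a regular value of ${\rm pr}\circ f$ exactly when $f$ is transverse to $F_y$. Therefore the signed count of preimages, which is $\deg({\rm pr}\circ f)$, equals the algebraic intersection number $f\cdot F_y$.

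Next we check the sign convention. At an intersection point $x$, the normal space to $F_y$ is identified with $T_y\Sigma$ by $d\,{\rm pr}$. So the local intersection sign of $f(\Sigma')$ with $F_y$ is positive precisely when $d({\rm pr}\circ f)_x$ preserves orientation. This identification is compatible with the chosen orientation convention on $X$, since the fiber orientation comes last.

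The fibers $F_y$ and $G = F_b$ are isotopic: move $y$ to $b$ along a path in $\Sigma$ and use local triviality of the bundle. Hence they are homologous in $H_2(X)$, and
\[
f\cdot F_y = f\cdot G.
\]
Since $G$ is a geometric dual of $f$, it meets $f(\Sigma')$ transversely and positively in exactly one point. Thus $f\cdot G = 1$, and so
\[
\deg({\rm pr}\circ f) = 1.
\]

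The only delicate point is the sign bookkeeping in the orientation convention. One could instead choose $b$ itself as the regular value, since $f$ is already transverse to $G$ with a single positive intersection point. Then $({\rm pr}\circ f)^{-1}(b)$ is a single point with positive local degree, which gives $\deg({\rm pr}\circ f)=1$ directly, with no homology argument needed.
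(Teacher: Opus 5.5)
Your proof is correct and rests on the same identification as the paper's: the local degree of ${\rm pr}\circ f$ at a preimage point equals the local intersection sign of $f(\Sigma')$ with the fiber. Your closing observation, that $b$ itself is a regular value with a single positive preimage, is exactly the paper's argument; the detour through a generic fiber $F_y$ and homology invariance is unnecessary, though it does show that the conclusion only needs $[f]\cdot[G]=1$ algebraically.
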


\begin{proof}
Recall that $b={\rm pr}(G)\in \Sigma$, so that $G={\rm pr}^{-1}(b)$.  Since
$f(\Sigma')$ has geometric dual $G$, $b$ is a regular value of
${\rm pr}\circ f$ and$({\rm pr}\circ f)^{-1}(b)=f^{-1}(G)$
consists of a single point.
At the unique point of $({\rm pr}\circ f)^{-1}(b)$, the local degree of
${\rm pr}\circ f$ is equal to the local intersection sign of
$f(\Sigma')$ with the fiber $G$.  
Therefore, $\deg({\rm pr}\circ f) = 1$.
\end{proof}

We recall the definition of the pinch map and the characterization of degree‑one maps between surfaces, 
following the work \cite{E79} of Edmonds.
Let $S,S'$ be compact, connected and oriented surfaces with one boundary component.

\begin{defn}[\cite{E79}]
A map $p\colon S'\to S$ is called a \emph{pinch map}
if there exists a compact connected surface $S_1\subset {\rm int}(S')$ whose boundary
is a single simple closed curve in $S'$, such that
$S = S'/S_1$ and $p$ is the quotient map which collapses $S_1$ to one point.
\end{defn}

\begin{thm}[\cite{E79}]\label{thm:ed}
Let $f\colon S'\to S$ be a map between surfaces.  
Suppose $\deg(f)=1$, $f^{-1}(\partial S) = \partial S'$ and $f|_{\partial S'}$ is a diffeomorphism.
Then $f$ is homotopic to a pinch map $p\colon S'\to S$ relative to $\partial S'$.
\end{thm}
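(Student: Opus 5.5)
The plan is to compare $S'$ with $S$ through a system of cutting arcs. First homotope $f$ relative to $\partial S'$ so that $f$ is smooth and transverse to a collection of disjoint, properly embedded arcs $a_1,\dots,a_{2h}$ in $S$ that cut $S$ into a single disk $\Delta$. Here $h$ is the genus of $S$. Since $f^{-1}(\partial S)=\partial S'$ and $f|_{\partial S'}$ is a diffeomorphism, each preimage $f^{-1}(a_k)$ is a properly embedded $1$-manifold in $S'$. It consists of exactly one arc $b_k$, whose endpoints are the preimages of the endpoints of $a_k$, together with finitely many closed circles. The arc $b_k$ maps onto $a_k$ relative to its endpoints, so after a homotopy supported near $b_k$ we may assume $f|_{b_k}$ is a diffeomorphism onto $a_k$.

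The key step, and the main obstacle, is to remove the closed circle components of $f^{-1}(a_k)$ by homotopies of $f$ supported away from $\partial S'$. For a circle $c\subset f^{-1}(a_k)$, the loop $f|_c$ lies in the arc $a_k$ and is therefore null-homotopic. The argument splits according to the two sides of $c$.
\begin{itemize}
  \item If a collar of $c$ maps to the same side of $a_k$ on both sides, we push $f$ off $a_k$ near $c$.
  \item If the two sides of $c$ map to opposite sides, $c$ is a genuine crossing. Here I would use asphericity of $S$ together with the degree-one hypothesis. Concretely, I would use the induced map on $\pi_1$, which is surjective for degree-one maps, and the algebraic intersection count $f_*(x)\cdot a_k = x\cdot f^{-1}(a_k)$. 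These should let me pair up such circles, or tube them to one another or to $b_k$, through regions mapped into $\Delta$. A Kneser-type surgery on the map then reduces the number of preimage circles.
\end{itemize}
I expect this to be the delicate part of the argument. The first thing I would try is an innermost-circle induction on the complexity $\sum_k \#\pi_0(f^{-1}(a_k))$.

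Once $f^{-1}(a_1\cup\dots\cup a_{2h})=b_1\cup\dots\cup b_{2h}$, take a regular neighbourhood $R$ of $\partial S'\cup\bigcup_k b_k$. Because $f|_{\partial S'}$ is a diffeomorphism, the endpoints of the $b_k$ occur on $\partial S'$ in the same cyclic pattern as the endpoints of the $a_k$ occur on $\partial S$. Hence $R$ is diffeomorphic to a regular neighbourhood $N$ of $\partial S\cup\bigcup_k a_k$, which is $S$ minus an open disk. In particular the remaining boundary of $R$ is a single circle, and $S_1=\overline{S'\setminus R}$ is a compact connected surface in ${\rm int}(S')$ bounded by one simple closed curve. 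After a homotopy supported near the $b_k$, $f|_R$ is a diffeomorphism onto $N$ that agrees with $f$ on $\partial S'$.

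Finally, $f(S_1)$ is disjoint from every $a_k$, so $f$ maps $S_1$ into $\overline{S\setminus N}$, which is a disk, sending $\partial S_1$ to its boundary circle. Since a disk is contractible, $f|_{S_1}$ is homotopic relative to $\partial S_1$ to a map that collapses $S_1$ onto a point after identifying the disk with a cone on its boundary. Gluing this homotopy with the identity on $R$ gives a homotopy relative to $\partial S'$ from $f$ to the quotient map $S'\to S'/S_1\cong S$, which is a pinch map.
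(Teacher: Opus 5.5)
The paper does not prove this statement; it quotes it from Edmonds. So I judge your argument on its own terms. The outer framework works: with $f$ transverse to the arcs, $f^{-1}(a_k)$ is $b_k$ plus circles. Once no circles remain, your regular-neighbourhood argument and the coning of $S_1$ produce a pinch map. (Strictly, $f(S_1)$ lies in the open disk $S\setminus(\partial S\cup\bigcup_k a_k)$ rather than in $\overline{S\setminus N}$, which is harmless.)

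\textbf{The gap.} The step you flag yourself, removing the circles, is the whole content of the theorem, and the proposal gives no argument for it.

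Your first case never occurs. Transversality pulls back the co-orientation of $a_k$, so the two sides of every component of $f^{-1}(a_k)$ map to opposite sides of $a_k$.

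An innermost-circle induction, using contractibility of $\Delta$, only disposes of circles that cut off a region with no other boundary. Essential circles genuinely occur, and they cannot be removed by any homotopy rel $\partial S'$ that keeps $b_1,\dots,b_{2h}$ as the preimage arcs. For example, take a pinch map collapsing $C$ to $x_0\in{\rm int}(\Delta)$. Redefine it on $C$ as $C\to C/\partial C\cong T^2\to S^1\xrightarrow{\ell}S$, where $\ell$ is a loop at $x_0$ crossing $a_1$ once. This map still has degree one, and $f^{-1}(a_1)=b_1\cup c$ with $c$ nonseparating. Moreover, $f_*$ is nontrivial on loops in $S_1$ joined to $*\in\partial S'$ through $R$, and this is invariant under homotopy rel $\partial S'$. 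If $c$ were removed with the $b_k$ kept, then $f(S_1)\subset\Delta$ would force $f_*$ to be trivial on those loops. So $c$ must be banded into $b_1$, which changes the arcs.

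Neither surjectivity of $f_*$ nor the homological identity $f_*(x)\cdot a_k=x\cdot f^{-1}(a_k)$, as you invoke them, tells you that such bands exist or that the process terminates.

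\textbf{What is needed.} You need three things:
(i) while circles remain, a band inside one complementary region joining two \emph{distinct} components of some $f^{-1}(a_k)$ from the same side; such a band maps into the disk $\Delta$, hence is homotopic rel endpoints into $a_k$, and banding along it merges the two components;
(ii) a complexity that decreases under these moves;
(iii) a proof that when no move is available only the $b_k$ remain.

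One way to supply this is Stallings folding. Let $\Gamma$ be the dual graph of $F=f^{-1}(\bigcup_k a_k)$, with regions as vertices and components of $F$ as edges. It carries a natural map to the rose $T$ dual to the arcs, and this map is $\pi_1$-surjective because $\deg f=1$.

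The two moves are as follows. Two edge-ends at a vertex mapping to the same end of the same petal are the same side of two different components of some $f^{-1}(a_k)$, so a fold is exactly such a banding. A valence-one vertex whose region meets no segment of $\partial S'$ can have its region pushed across its unique boundary circle. Regions meeting $\partial S'$ have valence at least two, since each boundary segment is flanked by two different sides. Both moves lower $\#\pi_0(F)$, so the process terminates.

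When neither move applies, $\Gamma\to T$ is an immersion of a core graph that is $\pi_1$-surjective, hence an isomorphism. That says precisely that $F=b_1\cup\dots\cup b_{2h}$. A binding-tie argument along a path whose image word cancels would also work.

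Without an argument of this kind, the proposal only reduces the theorem to an unproved claim of the same strength.
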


The following proposition shows that, up to a reparameterization and an isotopy,
we may tame any embedding $f:\Sigma' \hookrightarrow X$ to a standard case after taking internal stabilizations.
Recall that $g,g'$ are genera of $\Sigma$ and $\Sigma'$ respectively.

\begin{lem}\label{lem:first isotopy}
  Let $f: \Sigma' \hookrightarrow X$ be an embedding with the geometric dual $G$.
  There is a diffeomorphism $r: \Sigma' \rightarrow \Sigma'$,
  such that $f \circ r: \Sigma' \hookrightarrow$ is isotopic to $\tilde{f}^{\bullet (g'-g)}:\Sigma' \hookrightarrow X$.
  Here $\tilde{f}:\Sigma \hookrightarrow X$ is an embedding,
  still with geometric dual $G$, 
  such that ${\rm pr}\circ \tilde{f}$ induces identity map between fundamental groups. 
  Moreover, 
  the reparameterization $r$ can be supported away from a neighbourhood of $f^{-1}(G)$,
  and the isotopy can be supported away from a neighbourhood of $G$.
\end{lem}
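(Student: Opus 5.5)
We will use Lemma \ref{lem:deg1}, Theorem \ref{thm:ed}, Lemma \ref{lem:gabai} and Proposition \ref{prop2.9}, in that order: first make ${\rm pr}\circ f$ a pinch map after reparametrization, then find $g'-g$ disjoint compressing disks, then compress.

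\textbf{Step 1: make ${\rm pr}\circ f$ standard near $G$.} Let $p_0=f^{-1}(G)$. Since $G$ is a transverse geometric dual, ${\rm pr}\circ f$ restricts to a diffeomorphism from a small disk $D'\ni p_0$ onto a small disk $D\ni b$, after a small isotopy supported near $G$. Set $S'=\Sigma'\setminus {\rm int}(D')$ and $S=\Sigma\setminus{\rm int}(D)$. Then ${\rm pr}\circ f|_{S'}\colon S'\to S$ satisfies $({\rm pr}\circ f)^{-1}(\partial S)=\partial S'$. It restricts to a diffeomorphism on the boundary, and it has degree one by Lemma \ref{lem:deg1}. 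By Theorem \ref{thm:ed}, it is homotopic rel $\partial S'$ to a pinch map collapsing a subsurface $S_1\subset{\rm int}(S')$ with one boundary circle. Since $S'\setminus S_1$ must map homeomorphically onto $S$ minus a point, $S_1$ has genus $g'-g$. Using the classification of surfaces, choose a diffeomorphism $r$ of $\Sigma'$, supported away from $D'$, carrying a standard symplectic basis to one whose first $2g$ curves $a_1,b_1,\dots,a_g,b_g$ lie in $S'\setminus S_1$ and whose remaining curves $a_{g+1},b_{g+1},\dots$ lie in $S_1$. Under ${\rm pr}\circ f\circ r$, the curves in $S_1$ are then null-homotopic in $\Sigma$.

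\textbf{Step 2: produce compressing disks.} The curves $\alpha_k=f\circ r(a_{g+k})$, $k=1,\dots,g'-g$, are disjoint, miss $G$, and do not separate $f(\Sigma')$. The main obstacle is condition (3) of Lemma \ref{lem:gabai}: each $\alpha_k$ must be null-homotopic in $X\setminus G$. The space $X\setminus G$ fibers over $\Sigma\setminus b$ with fiber $S^2$, so $\pi_1(X\setminus G)\cong\pi_1(\Sigma\setminus b)$, a free group. The pinch homotopy from Step 1 is rel $\partial S'$ and takes place in $S$, so it lifts to a homotopy in $X\setminus G$ after composing with a section over $S$. Hence ${\rm pr}_*(\alpha_k)$ is trivial in $\pi_1(\Sigma\setminus b)$, and therefore $\alpha_k$ is trivial in $\pi_1(X\setminus G)$. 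I expect the care to be needed here, namely in making sure the homotopy avoids the fiber over $b$, which is exactly why we cut out $D$. Lemma \ref{lem:gabai} then yields disjoint compressing disks $D_1,\dots,D_{g'-g}$ with $\partial D_k=\alpha_k$.

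\textbf{Step 3: compress and stabilize.} Let $\tilde f=(f\circ r)_{D_1,\dots,D_{g'-g}}$, an embedding of a genus-$g$ surface. It is still dual to $G$, because the disks lie in $X\setminus G$ and can be chosen to avoid a neighbourhood of $G$. Proposition \ref{prop2.9} applies, with the closed case handled by removing a neighbourhood of $G$ and viewing $G$ as a boundary dual. It gives that $f\circ r$ is isotopic to $\tilde f^{\bullet(g'-g)}$ by an isotopy supported away from $G$. After compression, the surface is identified with $S'\setminus S_1$ capped by disks. On this surface ${\rm pr}\circ\tilde f$ is homotopic to the pinch map restricted there, which is a homeomorphism onto $\Sigma$. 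Therefore ${\rm pr}\circ\tilde f$ induces the identity on $\pi_1$ under this identification.
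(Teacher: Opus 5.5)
Your proposal is correct and follows essentially the paper's route: Edmonds' pinch map for ${\rm pr}\circ f|_{S'}$, then a reparametrization $r$ fixed on $D'$ that moves the last $g'-g$ handles into the collapsed subsurface, then the isomorphism $\pi_1(X\setminus G)\cong\pi_1(\Sigma\setminus b)$ to check condition (3) of Lemma \ref{lem:gabai}, and finally Proposition \ref{prop2.9}. There are two small differences: the paper first isotopes $f\circ r|_{{\rm sk}^1}$ onto the $1$-skeleton of a stabilized section $\S^{\bullet (g'-g)}$ extending $f|_{D'}$, a normalization your argument shows is not needed for the stated conclusion; and your preliminary isotopy near $G$ in Step 1 is unnecessary and should be dropped, because $b$ is a regular value of ${\rm pr}\circ f$ with a single preimage, so shrinking $D$ already gives $({\rm pr}\circ f)^{-1}(D)=D'$, and dropping it keeps the isotopy supported away from $G$.
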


\begin{proof}

Let $q\in\Sigma'$ be the unique point such that $f(q)\in G$, and recall that $b={\rm pr}(G)$.  
Since $f(\Sigma')$ intersects $G$ transversely and positively at $f(q)$, 
we may take a sufficiently small closed disk $D \subset \Sigma$ centered at $b$, 
satisfying that $({\rm pr} \circ f)^{-1}(D)=D'$.
Here $D'\subset\Sigma'$ is a disk containing $q$, and
\[
({\rm pr} \circ f)|_{D'}\colon D'\longrightarrow D
\]
is an orientation-preserving diffeomorphism.  
We write
\[
N=\nu(G)={\rm pr}^{-1}(D),\qquad
X'=X\setminus\operatorname{int}(N),
\]
and
\[
S'=\Sigma'\setminus\operatorname{int}(D'),\qquad
S=\Sigma\setminus\operatorname{int}(D).
\]
Then $f|_{S'}$ is a neat embedding into $X'$.

According to Lemma \ref{lem:deg1}, the map ${\rm pr}\circ f\colon \Sigma'\rightarrow \Sigma$ has degree $1$. 
It deduces that
\[
({\rm pr}\circ f)|_{S'}\colon (S',\partial S')\longrightarrow (S,\partial S)
\]
is a map satisfying the conditions of Theorem \ref{thm:ed}.
Hence, $({\rm pr}\circ f)|_{S'}$ is homotopic to a pinch map $ p\colon S'\rightarrow S$ relative to $\partial S'$.
Let $C\subset{\rm int}(S')$ be the compact connected subsurface collapsed by $p$.
Since $S'/C = S$ and $\partial C$ is connected, $C$ has genus $g'-g$.

Fix a standard $1$-skeleton
\[
{\rm sk}^1 =\vee_{i=1}^{g'}(a_i\vee b_i)\subset\Sigma',
\]
based at a point of $\partial S'$,
and adapted to the decomposition of $\Sigma'$ into a genus-$g$ subsurface $\Sigma' \setminus C$ containing $D'$ and the subsurface $C$.  
We may identify $\pi_1(S')$ to the presentation with free generators $a_1,b_1,\cdots,a_{g'},b_{g'}$.
According to the Dehn-Nielson theorem, 
there is a diffeomorphism $r\colon\Sigma'\to\Sigma'$,
whose restriction to $D'$ is the identity map.
After precomposing with $r$,
we may assume that
\[
(p \circ r)_*(a_i)=a_i,\qquad
(p \circ r)_*(b_i)=b_i\quad (1\leq i\leq g),
\]
whereas
\[
({\rm pr}\circ f \circ r)_*(a_i)=({\rm pr}\circ f \circ r)_*(b_i)=1
\quad (g<i\leq g').
\]
Here, on the right-hand side, we use the standard free generators of $\pi_1(S)$.

We now construct a section $\S\colon\Sigma\hookrightarrow X$ of the $S^2$-bundle $X$.
We may identify $D'$ to $D$ via the diffeomorphism $({\rm pr}\circ f)|_{D'}$. 
Under such identification, the disk $f(D')$ is a section of the bundle $N\rightarrow D$.  
This local section extends to a global section $\S\colon\Sigma\hookrightarrow X$ for the $S^2$-bundle $X$ over $\Sigma$.
Indeed, the restriction of the $S^2$-bundle to $S$ is trivial, 
and the boundary value of the local section extends over $S$ since $\pi_1(S^2)=0$. 
After taking $g'-g$ internal stabilizations at the points away from the disk $f(D')$, we obtain the embedding $\S^{\bullet g'-g}$.

Consequently, with respect to the above standard $1$-skeleton, we have
\[
({\rm pr}\circ\S^{\bullet g'-g})_*(a_i)=a_i,\qquad
({\rm pr}\circ\S^{\bullet g'-g})_*(b_i)=b_i
\quad (1\leq i\leq g),
\]
and
\[
({\rm pr}\circ\S^{\bullet g'-g})_*(a_i)
=({\rm pr}\circ\S^{\bullet g'-g})_*(b_i)=1
\quad (g<i\leq g'),
\]
and $\S^{\bullet g'-g}|_{D'}$ is identical to $f|_{D'}$.

Since ${\rm pr}\circ f \simeq p$, it follows that
\[
({\rm pr}\circ f\circ r|_{{\rm sk}^1})_* = ({\rm pr}\circ\S^{\bullet g'-g}|_{{\rm sk}^1})_*:\pi_1({\rm sk}^1) \longrightarrow \pi_1(S).
\]
Because the map ${\rm pr}_*\colon\pi_1(X')\to\pi_1(S)$ is an isomorphism, we have
\[
(f\circ r|_{{\rm sk}^1})_* = (\S^{\bullet g'-g}|_{{\rm sk}^1})_*:\pi_1({\rm sk}^1) \longrightarrow \pi_1(X').
\]
Thus the two embeddings of ${\rm sk}^1$ into $X'$ are homotopic. 
By a general-position argument, this homotopy can be perturbed to an isotopy supported away from $\partial X'$. 
After applying the isotopy extension theorem, we obtain an ambient isotopy $H: X \times I \rightarrow X$, 
such that $H$ fixes $N$, $H(-,0) = {\rm id}_X$, and $H(f \circ r(-),1)|_{{\rm sk}^1} = \S^{\bullet g'-g}|_{{\rm sk}^1}$.

Denote the embedding $H(f \circ r(-),1): \Sigma' \hookrightarrow X$ by $\phi$. 
Since the isotopy $H$ fixes $N$, $G$ is still a geometric dual of $\phi$.
Regarding the induced map on fundamental groups,
$\phi_*$ maps $a_i,b_i$ to the unit for $g < i \leq g'$.
The embedding $\phi$ together with the simple closed curves $\phi(a_{g+1}), \cdots, \phi(a_{g'})$ satisfy the conditions of Lemma \ref{lem:gabai}.
We can find $g'-g$ disjoint compressing disks $D_{g+1}, \cdots, D_{g'}$ in $X'$ 
bounded by $\phi(a_{g+1}), \cdots, \phi(a_{g'})$ respectively.
According to Proposition \ref{prop2.9}, $\phi$ is isotpic to $(\phi_{D_{g+1},\cdots,D_{g'}})^{\bullet g'-g}$,
via an isotopy supported away from $N$.
Denoting $\phi_{D_{g+1},\cdots,D_{g'}}$ by $\tilde{f}$, we conclude our proof.
\end{proof}

\subsection{Isotopy classification of embedded $\Sigma'$ in $X$}

In this subsection, we state in detail our main result for isotopy classification of embedded $\Sigma'$ in $X$ with geometric dual $G$.

\begin{thm}\label{thm:main1}
  Let $\Sigma'_1, \Sigma'_2 \subset X$ be two embedded closed surfaces with genus-$g'$ in $X$.
  Suppose the following conditions holds.
  \begin{itemize}
    \item[(1)] The sphere $G$ is a common geometric dual of $\Sigma'_1$ and $\Sigma'_2$.
    \item[(2)] For $i=1,2$, there is a parameterization $f_i: \Sigma \hookrightarrow X$ of $\Sigma'_i$,
    such that $f_1$ and $f_2$ are homotopic.
  \end{itemize}
  Then $\Sigma'_1$ and $\Sigma'_2$ are ambiently isotopic.
\end{thm}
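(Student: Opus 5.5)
We will follow the route sketched in the introduction: normalize near $G$, reduce to genus $g$ using internal stabilizations, and then remove the remaining disk-level difference with Theorem \ref{thm1}/Corollary \ref{cor1}.

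\textbf{Step 1: normalize near $G$ and on the $1$-skeleton.} Since $G$ is a common geometric dual, an initial ambient isotopy fixing $G$ setwise makes $\Sigma'_1$ and $\Sigma'_2$ agree on a neighbourhood $N=\nu(G)$; each then meets $N$ in a disk section over $D$. Apply Lemma \ref{lem:first isotopy} to $f_1$ and to $f_2$. This gives reparametrizations $r_k$ and isotopies supported away from $N$ with $f_k\circ r_k\simeq \tilde f_k^{\bullet(g'-g)}$. Here each $\tilde f_k\colon\Sigma\hookrightarrow X$ has geometric dual $G$, and ${\rm pr}\circ\tilde f_k$ induces the identity on $\pi_1$. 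We may arrange that $\tilde f_1,\tilde f_2$ agree with a common section over $D$, and we take $\S_0$ to be that section.

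\textbf{Step 2: show $\tilde f_1\simeq\tilde f_2$.} The homotopy class of $\tilde f_k$ rel $b$ is $\S_{d_k}$ for some $d_k\in\ZZ$, by Corollaries \ref{cor:hmpt1}, \ref{cor:hmtp2} and Proposition \ref{prop:hmtp}. The integer $d_k$ is read off from the $\pi_2$-coordinate, namely the degree of ${\rm pr}_2$ (or the intersection number with $\S_0$). Internal stabilization does not change homological data, so $d_k$ equals the corresponding invariant of $f_k$, and hence $d_1=d_2$ since $f_1\simeq f_2$.

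A subtlety is that $f_1\circ r_1$ and $f_2\circ r_2$ need not be homotopic as parametrized maps. However, the invariant used (an intersection number with $\S_0$ or with a fiber-type class) depends only on the image surface and its orientation. The reparametrization is harmless because we only need ambient isotopy of images. So $d:=d_1=d_2$, and both $\tilde f_k$ are homotopic to $\S_d$ rel $b$.

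\textbf{Step 3: apply Proposition \ref{prop:2526} and Corollary \ref{cor1}.} By Proposition \ref{prop:2526}, after an ambient isotopy each $\tilde f_k$ agrees with $\S_d$ near ${\rm sk}^1$. Removing the neighbourhood gives $X''$, and on the $2$-cell we get $\tilde f_k'=(\S_d')_{\omega_k}$. The disk $\S_d'$ has a boundary geometric dual in $\partial X''$: the fiber $G$ pushed to the boundary, or the fiber dual coming from $D\subset$ the $2$-cell region. Since $g'-g\ge1$, we can place all the stabilizations inside the $2$-cell. Corollary \ref{cor1} then gives
\[
(\tilde f_k')^{\bullet(g'-g)}\simeq (\S_d')^{\bullet(g'-g)}
\]
rel boundary. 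Gluing back yields $\tilde f_1^{\bullet(g'-g)}\simeq\tilde f_2^{\bullet(g'-g)}$, using Remark \ref{rem:mkpt2} to transport stabilizations through isotopies. Chaining the isotopies gives that $\Sigma'_1$ and $\Sigma'_2$ are ambiently isotopic, via the isotopy extension theorem.

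\textbf{Main obstacle.} The hardest step is making Step 3 honest. We need a boundary geometric dual for the disk in $X''$, and the duals used must match the hypotheses of Corollary \ref{cor1}. We also need the stabilizations produced by Lemma \ref{lem:first isotopy} to be movable (Remark \ref{rem:mkpt}) into the $2$-cell region away from ${\rm sk}^1$, without disturbing the normalization on ${\rm sk}^1$. The hypothesis $g'>g$ is used exactly here, since at least one stabilization is available.
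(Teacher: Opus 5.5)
Your argument follows the paper's proof of Theorem~\ref{thm:main1} essentially step for step. You normalize near $G$ and apply Lemma~\ref{lem:first isotopy} to both surfaces. You match the $\pi_2$-coordinate (degree of ${\rm pr}_2$, resp.\ intersection number with $\S_0$), which is unchanged by isotopy, orientation-preserving reparametrization and internal stabilization. You then apply Proposition~\ref{prop:2526} and finish with Corollary~\ref{cor1} on the $2$-cell disk, with the $g'-g\ge 1$ stabilizations placed away from ${\rm sk}^1$. Comparing each $\tilde f_k'$ with $\S_d'$, rather than $D_1$ with $(D_2)_\omega$ as the paper writes, is the more literal use of Proposition~\ref{prop:2526}(b).

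The one thing to correct is how you justify the hypothesis you flag as the main obstacle. $\S_d'$ has no boundary geometric dual in $\partial X''$:
\begin{itemize}
\item[(i)] $\partial X''=\partial\nu(\S_d({\rm sk}^1))\cong\#_{2g}S^1\times S^2$, and $H_1(\partial X'')\to H_1(\nu(\S_d({\rm sk}^1)))$ is an isomorphism.
\item[(ii)] $\partial\S_d'$ represents the attaching word $\prod_i[a_i,b_i]$, so it is null-homologous in $\partial X''$.
\item[(iii)] Hence it has algebraic intersection $0$ with every closed surface in $\partial X''$, so no sphere there can meet it exactly once.
\end{itemize}
In particular $G$ cannot be ``pushed to the boundary''. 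It passes through $\S_d(b)\in\S_d({\rm sk}^1)$, so $G\cap X''$ is only a disk.

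What your second suggestion actually produces is an \emph{interior} dual. Take $x$ in the open $2$-cell, away from the points $p_i$ used to build $\S_d$. Once $\nu(\S_d({\rm sk}^1))$ is thin, the fiber ${\rm pr}^{-1}(x)$ lies in ${\rm int}(X'')$ and has trivial normal bundle. It meets $\S_d'$ once, transversely and positively, and by general position it misses the data defining the self-referential tubes. The light-bulb moves in the proof of Theorem~\ref{thm1} only need such a dual; a boundary dual pushed into a collar is the special case. So Corollary~\ref{cor1} still applies to $\S_d'$ with this dual, and your chain of isotopies goes through.

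The paper also invokes Theorem~\ref{thm1} at this point without checking the dual hypothesis. So this is a repair of a step your proof shares with the paper, not a departure from it.
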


\begin{proof}
  We first deal with the intersections of the embedded surfaces with a tubular neighbourhood of the geometric dual $G$. 
  Denote $\Sigma'_i \cap G$ by $q_i$ for $i = 1,2$.
  There is an isotopy $h: G \times I \rightarrow G$ sending $q_1$ to $q_2$.
  By applying the isotopy extension theorem and a reparameterization, 
  we may assume that $f_1(b') = f_2(b') \in G$.
  Here $b'$ is a fixed basepoint in $\Sigma'$. 
  Since $f_1$ and $f_2$ intersect $G$ transversely at the same point,
  up to an ambient isotopy supported in a tubular neighbourhood $N$ of $G$,
  we may assume that there is a $2$-disk $D'$ in $\Sigma'$ containing $b'$, 
  such that $f_1^{-1}(N) = f_2^{-1}(N) = D'$, 
  and $f_1|_{D'}, f_2|_{D'} : D' \hookrightarrow N$ are neat embeddings with the same image.
  Because the mapping class group of the oriented $2$-disk is trivial,
  we may isotope the parameterization $f_1$, 
  such that the resulting parameterization agrees with $f_2$ on $D'$.
  Thus, we can assume that $f_1, f_2$ are identical on $D'$. 

  Apply Lemma \ref{lem:first isotopy} to the embeddings $f_1$ and $f_2$. 
  For $i=1,2$, up to reparameterization, $f_i$ is isotopic to $\tilde{f_i}^{\bullet g'-g}$.
  Since the reparameterization and the isotopy can be supported away from a neighbourhood of $G$,
  the embeddings $\tilde{f_1}^{\bullet g'-g}$ and $\tilde{f_2}^{\bullet g'-g}$ are still identical on $D'$.
  Here $\tilde{f_i}$ is an embedding of $\Sigma$ such that ${\rm pr} \circ \tilde{f_i}$ induces identity map on fundamental groups.
  Because internal stabilizations can be taken at the points away from a disk in $\Sigma$, 
  we may assume that $\tilde{f_1}$ and $\tilde{f_2}$ are identical on a disk $D \subset \Sigma$.

  In order to apply Proposition \ref{prop:2526} to $\tilde{f_1}$ and $\tilde{f_2}$, we now only need to check condition $(3)$ of Proposition \ref{prop:2526}. 
  Consider the elements in $\langle \Sigma',X \rangle$ they represent.
  For $X = \Sigma \times S^2$, we apply Corollary \ref{cor:hmpt1}.
  Since isotopy, orientation-preserving reparameterization and taking internal stabilizations do not change the degree of the map, 
  the degrees of ${\rm pr} \circ \tilde{f_1}$ and ${\rm pr} \circ \tilde{f_2}$ are identical.
  Therefore, $\tilde{f_1}, \tilde{f_2}$ represent the same element in $\langle \Sigma',X \rangle$.
  For $X = \Sigma \ltimes S^2$, the same conclusion holds, 
  because isotopy, orientation-preserving reparameterization and taking internal stabilizations do not change the algebraic intersection number. 
  Then we apply Proposition \ref{prop:2526}, 
  ambiently isotope $\tilde{f_1}$ and $\tilde{f_2}$ to embeddings identical to $\S_d$ on a neighbourhood of $1$-skeleton, 
  for an integer $d$.
  We still denote the resulting embeddings by $\tilde{f_1}$ and $\tilde{f_2}$.

  After excluding a tubular neighbourhood $\nu({\S_d}({\rm sk}^1))$ in $X$, 
  we obtain neat embeddings $D_1, D_2: D^2 \hookrightarrow X''$ homotopic relative to the boundary.
  Note that internal stabilizations can be taken at the points away from the neighbourhood of $1$-skeleton of $\Sigma$.
  The embeddings $\tilde{f_1}^{\bullet g'-g}$ and $\tilde{f_2}^{\bullet g'-g}$ give $D_1^{\bullet g'-g}$ and $D_2^{\bullet g'-g}$.
  According to Proposition \ref{prop:2526}, there is an element $\omega$ in $\ZZ[\pi_1(X'')]$, 
  such that $D_1$ is isotopic to $(D_2)_\omega$ relative to the boundary.
  Applying Theorem \ref{thm1} and Corollary \ref{cor1}, we deduce that $D_1^{\bullet g'-g}$ is isotopic to $(D_1)_\omega^{\bullet g'-g}$ relative to the boundary.
  By Remark \ref{rem:mkpt2}, $D_1^{\bullet g'-g}$ and $D_2^{\bullet g'-g}$ are isotopic relative to the boundary in $X'' \subset X$.
  It concludes our proof. 
\end{proof}

Recall that $\mathscr{E}$ is the set consisting of the embedded genus-$g'$ surfaces in $X$ with the geometric dual $G$.
We have the following direct corollary.

\begin{cor}
  There is a bijection 
  $$ \Phi: \mathscr{E}/\textit{ambient isotopy} \longrightarrow \pi_2(X) \cong \ZZ. $$
\end{cor}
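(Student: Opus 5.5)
The plan is to build $\Phi$ by composing the homotopy classification of Proposition \ref{prop:hmtp} with Theorem \ref{thm:main1}. For $\Sigma'_0\in\mathscr{E}$, choose an orientation-preserving parameterization $f\colon \Sigma'\hookrightarrow X$ with $f(b')\in G$, where $b'$ is the basepoint of $\Sigma'$. Since $\pi_2(X)\cong\ZZ$ is generated by the fiber class $[G]$, define $\Phi([\Sigma'_0])$ as follows:
\begin{itemize}
  \item for $X=\Sigma\times S^2$, take the degree of ${\rm pr}_2\circ f$ (Corollary \ref{cor:hmpt1});
  \item for $X=\Sigma\ltimes S^2$, take the algebraic intersection number $f\cdot\S_0$ (Corollary \ref{cor:hmtp2}).
\end{itemize}
Both quantities are homological, so they depend only on the oriented surface. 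In particular they are independent of the choice of $f$ and invariant under ambient isotopy. Hence $\Phi$ is well defined.

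For \textbf{injectivity}, suppose $\Phi(\Sigma'_1)=\Phi(\Sigma'_2)$. Fix parameterizations $f_1,f_2$ and apply Lemma \ref{lem:first isotopy}. After reparameterizing, each $f_i$ is isotopic to a stabilization $\tilde f_i^{\bullet(g'-g)}$. Moreover, ${\rm pr}\circ f_i$ induces on $\pi_1$ the standard homomorphism $\pi_1(\Sigma')\to\pi_1(\Sigma)$: the identity on $a_i,b_i$ for $i\le g$, and trivial for $i>g$. Since ${\rm pr}_*\colon\pi_1(X)\to\pi_1(\Sigma)$ is an isomorphism, the first component $\phi_1$ of Proposition \ref{prop:hmtp} agrees for $f_1\circ r_1$ and $f_2\circ r_2$. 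By Corollaries \ref{cor:hmpt1} and \ref{cor:hmtp2}, the $\pi_2$-component is detected by the integer $\Phi$, which agrees by hypothesis. Therefore $f_1\circ r_1$ and $f_2\circ r_2$ are homotopic, and Theorem \ref{thm:main1} gives that $\Sigma'_1$ and $\Sigma'_2$ are ambiently isotopic.

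For \textbf{surjectivity}, given $d\in\ZZ$, take the standard section $\S_d$, which has geometric dual $G$ and degree (or intersection number) determined by $d$. Then take $g'-g$ internal stabilizations away from $G$. Internal stabilization preserves the homology class, so $\Phi(\S_d^{\bullet(g'-g)})$ equals $d$ up to the affine normalization fixed by the convention for $\S_d$. Since $d$ ranges over all of $\ZZ$, the map $\Phi$ is onto.

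The main obstacle is the injectivity step, specifically checking that the $\pi_2$-component in Proposition \ref{prop:hmtp} is faithfully detected by $\Phi$. The sketched proof of Proposition \ref{prop:hmtp} measures this component relative to a chosen reference map $f_\varphi$. So I must verify that changing the $\pi_2$-component by $k[G]$ changes the degree (respectively, the intersection with $\S_0$) by exactly $k$. This holds because $G\cdot\S_0=1$ and ${\rm pr}_2$ restricted to a fiber has degree $1$; I expect this to reduce to a direct computation.
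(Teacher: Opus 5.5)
Your proposal is correct and follows essentially the same route as the paper: Lemma~\ref{lem:first isotopy} normalizes the $\pi_1$-component to the standard pinch homomorphism, Proposition~\ref{prop:hmtp} (via Corollaries~\ref{cor:hmpt1} and~\ref{cor:hmtp2}) reduces the remaining homotopy data to a single integer, and Theorem~\ref{thm:main1} upgrades homotopy to ambient isotopy. Your version is more complete than the paper's sketch: defining $\Phi$ by the homological degree or intersection number makes well-definedness under reparameterization immediate, and the stabilized sections $\S_d^{\bullet(g'-g)}$ give surjectivity, which the paper leaves implicit.
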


\begin{proof}
  Theorem \ref{thm:main1} shows that homotopy implies ambient isotopy for elements in $\mathscr{E}$.
  Therefore, we only need to clarify the homotopy classification for unparameterized embedded surfaces in $\mathscr{E}$.
  According to Lemma \ref{lem:first isotopy}, 
  the factors in ${\rm Hom}(\pi_1(\Sigma'),\pi_1(X)) = {\rm Hom}(\pi_1(\Sigma'),\pi_1(\Sigma))$ in Proposition \ref{prop:hmtp} 
  can always be taken to be the induced homomorphism of a standard pinch map.
  Thus, the map $\Phi$ is given by ${\rm pr}_2 \circ \phi$.
  Here $\phi$ is the bijection in Proposition \ref{prop:hmtp} and ${\rm pr}_2$ is the projection from the codomain of $\phi$ to its second factor.
\end{proof}

\bibliographystyle{plain}
\bibliography{ref}

\end{document}